\numberwithin{equation}{subsection}
\newtheorem{theorem}{Theorem}[section]
\newtheorem{lemma}[theorem]{Lemma}
\newtheorem{prop}[theorem]{Proposition}
\newtheorem{corollary}[theorem]{Corollary}
\newtheorem{thm}{Theorem}
\newtheorem{ques}[theorem]{Question}
\theoremstyle{definition}
\theoremstyle{remark}
\newcommand{\A}{\mathbb{A}}
\newcommand{\bC}{\mathbb{C}}
\newcommand{\G}{\mathbb{G}}
\newcommand{\Hh}{\mathbb{H}}
\newcommand{\bH}{\mathbb{H}}
\newcommand{\Pp}{\mathbb{P}}
\newcommand{\Q}{\mathbb{Q}}
\newcommand{\Qbar}{\overline{\Q}}
\newcommand{\R}{\mathbb{R}}
\newcommand{\Ss}{\mathbb{S}}
\newcommand{\Z}{\mathbb{Z}}
\newcommand{\cA}{\mathcal{A}}
\newcommand{\cF}{\mathcal{F}}
\newcommand{\cH}{\mathcal{H}}
\newcommand{\fsp}{\mathfrak{s}\mathfrak{p}}
\newcommand{\alg}{{\mathrm{alg}}}
\newcommand{\ra}{\rightarrow}
\newcommand{\im}{\mathrm{im}}
\DeclareMathOperator{\GL}{GL}
\DeclareMathOperator{\GSp}{GSp}
\DeclareMathOperator{\Sp}{Sp}
\DeclareMathOperator{\tor}{tor}
\DeclareMathOperator{\End}{End}
\DeclareMathOperator{\Hom}{Hom}
\DeclareMathOperator{\Aut}{Aut}
\DeclareMathOperator{\Res}{Res}
\DeclareMathOperator{\ad}{ad}
\author{Jacob Tsimerman}
\date{}
\begin{document}
\title{Abelian Varieties are not quotients of low-dimension Jacobians}
\maketitle
\begin{abstract}
    We prove that for any two integers $g\geq 4$ and $g'\leq 2g-1$, there exist abelian varieties over $\Qbar$ which are not quotients of a Jacobian of dimension $g'$. Our method in fact proves that most abelian varieties satisfy this property, when counting by height relative to a fixed finite map to projective space.


\end{abstract}

\section{Introduction}

Jacobians are a subset of abelian varieties, and much work has been devoted to figuring out `how large' that subset is. An elementary computation shows that for $g\geq 4$ the generic abelian varieties is not a Jacobian. Over 
$\bC$, this immediately implies that the generic abelian variety is not isogenous to a Jacobian, either. This can be seen in many ways, and essentially reduces to the fact that $\bC$ is an uncountable field, and therefore the union of countably many subvarieties cannot be equal to an ambient variety of higher dimension.

Katz and Oort conjectured that the same property holds over $\Qbar$. Over a countable field this problem becomes much more difficult, and was solved unconditionally in \cite{COJacobians,tsimjacobians} by relying on abelian varieties with complex multiplication, and appealing to the Andr\'e-Oort conjecture. Later, a different proof was given in \cite{MZjacobians}, by using the Pila-Zannier method and isogeny estimates of Masser-W\"ustholz\cite{MWisogeny}. 

A related fact in the positive direction is that every abelian variety $A$ is a quotient of a Jacobian. Indeed, one may find a smooth curve inside of $A$ not contained in a coset of a proper abelian subvariety and consider its Jacobian. One may ask for the lowest such number:

\begin{ques}\label{ques:lowestj}
    Let $g\geq 4$, and let $A$ be a generic abelian variety over an algebraically closed field $k$. What is the lowest integer $g'$ such that there exists a Jacobian $J$ of dimension $g'$ and a surjection $\phi:J\ra A$?
\end{ques}

Question \ref{ques:lowestj} is extremely difficult even for $k=\bC$. Comparing dimensions shows that $g'\geq \frac{g^2+g}{6}+1$, but this is far from optimal. In \cite{pirolaabelian} it is proven that $g'\geq \frac{g(g-1)}{2}+1$, with equality iff $g\in\{4,5\}$. For an upper bound there does not appear to be a better method known than taking a projective embedding and intersecting with hyperplanes to produce a curve, which gives an upper bound which looks like a power of $g!$. There is therefore a huge chasm and to the author's knowledge there is not even a conjecture as to what the truth should be. Relatedly, see work of Voisin\cite{voisinchowring} and Pirola\cite{pirolahyperelliptic} on giving a lower bound for the \emph{gonality} of a curve in a generic abelian variety.

The goal of this paper is to make progress on question \ref{ques:lowestj} for $k=\Qbar$:

\begin{thm}\label{thm:mainTorelli}
    Let $g\geq 4$. Then there exist abelian varieties over $\Qbar$ which are not quotients of a Jacobian of a curve of genus $\leq 2g-1$.
\end{thm}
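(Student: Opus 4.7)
The plan is to prove the theorem via a density argument. Fix a finite map $\Ag \to \Pp^N$ (on some toroidal compactification of a level cover, so heights are well-defined), and count $\Qbar$-points of bounded degree with height at most $H$: by Schanuel/Northcott-type counting this total is $\asymp H^{c \cdot \dim \Ag}$, and the goal is to show that the subset of quotients of Jacobians of dimension $g' \leq 2g-1$ grows strictly slower, forcing points outside.

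The target set can be parameterized as follows. By Poincar\'e reducibility, $A$ is a quotient of $J(C)$ with $\dim J(C) = g'$ iff $J(C)$ is isogenous to $A \times B$ for some abelian variety $B$ of dimension $g'-g$. So the locus of such $A$ is the image in $\Ag$ of
\[ W \;=\; \{(A,B) \in \Ag \times \mathcal{A}_{g'-g} \;:\; A \times B \text{ is isogenous to a Jacobian of dim } g'\}. \]
Inside $\mathcal{A}_{g'}$, the product locus of $A \times B$ with $\dim A = g$ has codimension $g(g'-g)$, while the Torelli locus has dimension $3g'-3$. Their intersection therefore has expected dimension $3g'-3 - g(g'-g)$, and hence the image in $\Ag$ has dimension at most $3g'-3 - g(g'-g)$. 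A direct verification shows $3g'-3 - g(g'-g) < g(g+1)/2 = \dim \Ag$ for all $g \geq 4$ and $g' \leq 2g-1$, yielding the desired dimension gap. To convert this gap into a density statement, I would bound the isogeny degree $A \times B \to J(C)$ polynomially in the Faltings height of $A$ using the Masser--W\"ustholz isogeny theorem, compare heights on $\Ag$, $\mathcal{A}_{g'-g}$, and $\mathcal{M}_{g'}$ by standard height inequalities on moduli spaces, and apply Northcott-type counting on each finite-degree stratum of the target locus to obtain a count of $\ll H^{c(3g'-3 - g(g'-g)) + o(1)} = o(H^{c \cdot \dim \Ag})$.

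The main obstacle is rigorously controlling the intersection of the Torelli locus with the product locus inside $\mathcal{A}_{g'}$: a priori it could have excess dimension strictly larger than the expected value computed above. The natural way to handle this is via Mumford--Tate theory: a generic Jacobian of dimension $g'$ has Mumford--Tate group $\GSp_{2g'}$, and being isogenous to a nontrivial product forces $\mathrm{MT}(J(C)) \subsetneq \GSp_{2g'}$, which confines $C$ to a proper Hodge-sublocus of $\mathcal{M}_{g'}$. Quantifying these Hodge subloci inside the Torelli locus, and propagating the resulting dimension bound through the arithmetic setup (heights of curves vs.\ heights of their Jacobians vs.\ heights of sub-factors under Masser--W\"ustholz), is where the bulk of the technical work concentrates.
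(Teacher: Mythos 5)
Your proposal takes a genuinely different route from the paper---a direct Northcott/Schanuel counting argument rather than the Pila--Zannier method---but it contains a gap at the precise point the paper identifies as the core difficulty, and I don't see how to repair it within your framework.

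The crucial step in your plan is: ``bound the isogeny degree $A \times B \to J(C)$ polynomially in the Faltings height of $A$ using the Masser--W\"ustholz isogeny theorem.'' Masser--W\"ustholz bounds the minimal isogeny degree between isogenous abelian varieties in terms of the Faltings height of \emph{one of them} \emph{and} the degree of a common field of definition. Here you have no control over either the Faltings height of $B$ or the degree of the field of definition of $B$ and $J(C)$: given a fixed $A$ of bounded height, the abelian varieties $B$ and $J(C)$ can a priori have arbitrarily large Faltings height and be defined over arbitrarily large number fields. So the Masser--W\"ustholz bound gives you no uniform bound on $N = \deg(A\times B \to J(C))$ in terms of $h(A)$ alone, and without such a bound the set of $A$ in question is a countable union of proper subvarieties of $\Ag$ with no effective control, which is exactly the situation the dimension count alone cannot rule out over a countable field. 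The paper flags this explicitly as \emph{the} obstruction (``one has no a-priori control over either the height or field of definition of $C$'') and resolves it by going in the opposite direction: instead of bounding $N$, Theorem \ref{thm:largegalois} shows that if the minimal isogeny degree $N$ is large then the field of definition of $B$ is large (of degree $\succ_g N$), exploiting the absence of an embedding $\GSp_{2g}\hookrightarrow\GSp_{2h}$ for $h<g$ so that the Galois action on $T_A$ cannot be ``absorbed'' by $T_C$. That produces many Galois conjugates of $B$, which feed into Pila--Wilkie, Ax--Schanuel (Theorem \ref{thm:AS}), and the Geometric Zilber--Pink theorem (Theorem \ref{thm:GZP}) to finish. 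This is a fundamentally different mechanism from the Northcott-type count you propose, and as far as I can see there is no way to substitute direct counting for it, because a bounded-height $A$ may genuinely admit isogenies of unbounded degree to Jacobians of unbounded height; the theorem is rescued not by bounding that degree but by forcing the Galois orbit of $B$ to grow with it.

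A secondary, smaller concern: you note that the excess-intersection issue between the Torelli locus and the product locus inside $\cA_{g'}$ is ``where the bulk of the technical work concentrates,'' but the Mumford--Tate argument you sketch does not obviously control \emph{all} the positive-dimensional excess components, especially those lying inside proper special subvarieties. The paper sidesteps this entirely: it never needs to estimate dimensions of intersections with the Torelli locus specifically, because the main theorem (Theorem \ref{thm:main}) is stated for an arbitrary subvariety $V\subset\cA_{g+h}$ subject only to the hypothesis that the generic $g$-dimensional abelian variety over $\bC$ is not a quotient of anything in $V$ (which for $V$ the Torelli locus with $h\le g-1$ is an easy dimension count, as in Lemma \ref{lem:Vdimension}). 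The atypical and weakly special structure of the intersection is then handled uniformly by Theorem \ref{thm:GZP} and Proposition \ref{prop:specialssplit}, together with the induction in Lemma \ref{lem:Vnotinpws}, rather than by any ad hoc analysis of the Torelli locus.
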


In fact, for our proof there is nothing special about the Torelli locus. We prove the following Theorem, where $\cA_g$ is the moduli space of Principally Polarized abelian varieties:
\begin{thm}\label{thm:main}
    Let $0\leq h<g$ be integers. Suppose that $V\subset \cA_{g+h}$ is a subvariety such that over $\bC$, the generic $g$-dimensional abelian variety is not a quotient of an abelian variety parameterized by $V$.

    Then there exist $g$-dimensional abelian varieties over $\Qbar$ which are not a quotient of an abelian variety parameterized by $V$. 
    
\end{thm}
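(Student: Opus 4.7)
The plan is a height-counting argument on $\Qbar$-points of $\cA_g$, combined with effective isogeny estimates in the Masser--W\"ustholz style. Fix a finite map $\psi\colon\cA_g\to \Pp^N$ and let $H_\psi$ denote the induced height on $\cA_g(\Qbar)$. The goal is to show that, among $\Qbar$-points of bounded degree and $H_\psi\le T$, those parametrizing $g$-dimensional abelian varieties which are quotients of some abelian variety parametrized by $V$ are outnumbered, so that in particular some good point must exist.

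\emph{Identifying the bad locus.} Since a surjection $B\twoheadrightarrow A$ is equivalent to $A$ being an isogeny factor of $B$, let $\cZ\subset\cA_g$ be the Zariski closure of the constructible set
\[
\bigl\{\,[B/K]\;:\;[B]\in V,\ K\subset B\text{ an }h\text{-dimensional abelian subvariety}\,\bigr\}.
\]
The hypothesis on $V$ is precisely $\cZ\subsetneq \cA_g$, and hence $\dim\cZ\le \dim\cA_g-1$. Every $[A]$ which is a quotient of an abelian variety parametrized by $V$ is therefore isogenous to some point of $\cZ$.

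\emph{Reduction to Hecke translates and counting.} By Masser--W\"ustholz (or the refinements of Gaudron--R\'emond), if $[A]\in\cA_g(\Qbar)$ is isogenous to some $[A']\in\cZ$, then the minimal such isogeny has degree polynomial in $H_\psi([A])$. Thus, when $H_\psi([A])\le T$, the point $[A]$ lies in a Hecke translate of $\cZ$ of level at most $T^c$ for a constant $c=c(g,h)$. The union of such translates decomposes into at most $T^{O(1)}$ irreducible subvarieties of $\cA_g$, each of dimension at most $\dim\cA_g-1$. Standard Schmidt/Schanuel-type bounds on projective subvarieties then give an upper bound of order $T^{(\dim\cA_g-1)\alpha + O(1)}$ on the number of bad $\Qbar$-points of bounded degree and $H_\psi\le T$, to be matched against a lower bound $\gg T^{\dim\cA_g\cdot\alpha}$ for the total count on $\cA_g$. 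For $T$ sufficiently large (and the degree parameter chosen appropriately), the former is negligible compared to the latter, producing the desired good points.

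\emph{Main obstacle.} The crux is the exponent arithmetic in the last step: the one-dimensional drop $\dim\cZ<\dim\cA_g$ must translate into a genuine gain in the counting exponent, not absorbed by the polynomial factors from the Hecke decomposition. This requires (i) a polynomial bound on the number of Hecke components at level $\le M$, (ii) a careful comparison between the Faltings height (in which Masser--W\"ustholz is stated) and the projective height $H_\psi$ under isogeny and Hecke correspondences, so that the exponent $c$ in the isogeny bound is quantitatively controlled, and (iii) a robust lower bound on the number of $\Qbar$-points of $\cA_g$ of bounded degree and height $\le T$, so that the bad locus does not inadvertently saturate the space. Arranging all constants so these combine into a strict surplus of good over bad is the central technical challenge.
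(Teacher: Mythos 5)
Your reduction to a proper subvariety $\cZ\subsetneq\cA_g$ is a legitimate reformulation (via Poincar\'e reducibility, a quotient $B\twoheadrightarrow A$ does give an $h$-dimensional $K\subset B$ with $B/K$ isogenous to $A$), but the crux of your argument --- the claim that the minimal isogeny from $[A]$ to a point of $\cZ$ has degree polynomial in $H_\psi([A])$ --- is not what Masser--W\"ustholz gives, and in fact is exactly the obstruction the paper has to work hard to overcome. The Masser--W\"ustholz isogeny estimate bounds the minimal isogeny degree between $A$ and $A'$ polynomially in $\max\bigl(h_{\mathrm{Fal}}(A),\,[k:\Q]\bigr)$ where $k$ is a field over which \emph{both} $A$ and $A'$ are defined. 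But $A'=B/K$ inherits its field of definition from $B$ (hence from the complementary $h$-dimensional factor $C$), over which you have no a priori control. A point of $\cZ$ isogenous to $A$ can live over a number field of arbitrarily large degree, and then Masser--W\"ustholz gives nothing. This is precisely the ``difficulty'' the paper isolates in its summary (``one has no a priori control over either the height or field of definition of $C$'') and resolves in Theorem \ref{thm:largegalois} by a group-theoretic argument exploiting $g>h$: since $\GSp_{2g}$ has no embedding in $\GSp_{2h}$, the kernel of $A\times C\to B$ must meet $A\times\{0\}$ in a large subgroup, which forces a large Galois orbit of $B$. Without that step, your reduction to Hecke translates of $\cZ$ at level $\le T^c$ simply misses bad points $A$ whose witnessing $A'\in\cZ$ is far away in isogeny distance but has a small Galois orbit --- the Frey--Mazur-type configurations the paper explicitly flags as the reason the method cannot reach $h=g$.

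Two secondary points. First, even granting an isogeny degree bound, your final counting step is not safe: the number of Hecke components at level $\le M$ is polynomial in $M$, and pushing $\cZ$ around by Hecke correspondences does not preserve the height $H_\psi$ in a way that makes a Schmidt/Schanuel comparison on each component straightforward; you would need precise exponents to see that the codimension-one drop genuinely beats the polynomial proliferation of components, and the interaction between Faltings height and $H_\psi$ under isogeny has to be controlled quantitatively. The paper avoids all of this by working in the fundamental domain for $\GSp_{2(g+h)}(\Z)$ with Pila--Wilkie: the large Galois orbit produces many $\Q$-rational points of bounded height in a definable set, and the contradiction comes from o-minimal point counting plus Ax--Schanuel and Geometric Zilber--Pink, not from comparing point counts on $\cA_g$ against those on a subvariety. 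Second, the paper also needs a separate geometric input (Proposition \ref{prop:specialssplit} and Lemma \ref{lem:grp}) to dispose of the case where $V$ lies in a proper weakly special subvariety; your approach as written does not even surface this issue because it never reaches the functional-transcendence stage. The upshot: the counting framework is plausible, but the central step you invoke as a black box is exactly the open part, and it is the heart of the paper's contribution.
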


In fact we shall show in Theorem \ref{thm:mainsstrong} that in a precise sense (analogous to what Masser, Zannier\cite{MZjacobians} and Zywina \cite{Zywina} do) most abelian varieties are not quotients of abelian varieties parameterized by $V$.

\subsection{Summary of the Proof}

Our proof follows the Pila-Zannier method, as employed by Masser and Zannier. In fact, in the case of $h=0$ our proof essentially specializes to theirs\footnote{We find it convenient to use slightly different counting functions (with rationals instead of integers more in line with \cite{Zywina}), but the essence of our argument reduces to exactly what they do.}. The main ideas are as follows:

\begin{enumerate}
    \item The main difficulty - and the main technical contribution of this paper - is to demonstrate `Large Galois Orbits'. Concretely, suppose that $A\in\cA_g(K)$ is an abelian variety defined over a number field $K$ and $B\in V(\Qbar)$ such that $A$ is a quotient of $B$. Then there is a degree $N$ isogeny $\phi$ from  $A\times C$ to $B$ for some $h$-dimensional abelian variety $C$. We wish to show that if there is one such $B$ then there are many - presuming $A$ is generic in some way (see Lemma \ref{prop:openimage} for the specifics). In particular, we wish to show that $B$ has a large field of definition so that we may consider all of the Galois conjugates of $B$.
    
    The difficulty is that one has no a-priori control over either the height or field of definition of $C$. We get around this by using the fact that $g>h$, and specifically that there is no embedding $\GSp_{2g}$ into $\GSp_{2h}$. Morally, this means that the Galois action on the Tate module of $C$ cannot `interact' with the action on the Tate module of $A$. Concretely, we let $K\subset A\times C$ be the kernel of $\phi$. Then we show (after some reductions) that for generic $A$, we have $K_A:=K\cap A\times\{0_C\}$ is large compared to $N$. This means that the abelian sub-variety in $B$ which is isogenous to $A$ has large field of definition, and hence $B$ must as well.

    \item Armed with this we use the Pila-Wilkie Theorem to obtain an unlikely intersection in $V$. We then use powerful transcendence theorems (Ax-Schanuel and Geometric Zilber-Pink, see \S\ref{subsection:transcendence}) to reduce to the case where $V$ is contained in a weakly special subvariety. 

    \item The last difficult piece is handling the case where $V$ is contained in a weakly special subvariety. We handle this in a somewhat unsatisfying (but very clean) ad-hoc method. Specifically, we show that if a weakly special subvariety contains a non-simple point with a Hodge-generic factor,  then the generic point over it is non-simple\footnote{Note that this is false without the Hodge-Generic assumption. For example, consider quaternionic curves in $\cA_2$, which contain non-simple CM points.}. This allows us to carry out an induction argument to finish. 
    
\end{enumerate}

In actuality we reverse steps 2 and 3, but the above outline is the heart of the proof. We remark on the limitation of $h<g$ for our method:

The reason that this is a problem is the step where we establish large Galois orbits. Specifically, the issue is a possible counterexample to Frey-Mazur: Let $A$ be a $g$-dimensional abelian variety and suppose that $C$ is another 
one such that $A[p]\cong C[p]$ as Galois modules. Then we may quotient out $A\times C$ by the graph of this isomorphism to obtain a $2g$-dimensional abelian variety defined over $\Q$\footnote{To get the polarizations working we should insist $p\equiv1\mod 4$ and add a quadratic twist so that the graph is a Lagrangian, but this is a minor technical point.}. We are unable to rule this out from happening for arbitrarily large primes, and hence we cannot get a handle on $g=h$. It seems to be a very difficult problem to rule such a thing out, even for a single $A$!

If one were able to circumvent this issue and establish large Galois orbits, then the rest of the proof would go through unchanged.

\subsection{Outline of the Paper}

In \S2 we recall background about abelian varieties, the corresponding  Galois representations, functional transcendence, and the Pila-Wilkie method. In \S3 we prove our `large Galois orbits' result. In \S4 we prove a result about weakly special subvarieties which enables us to reduce to the case of $V$ not being contained in a proper weakly special. Finally, in \S5 we combine everything to prove the main theorem.

\subsection{Acknowledgements}
It is a pleasure to thank Jonathan Pila and Ananth Shankar for enlightening discussions on this problem.

\section{Background}
\subsection{Polynomial boundedness}

We write $A\prec_C B$ or $B\succ_C A$ or non-negative functions $A,B$ depending on $C$ and possibly other variables, if there are constants $c_1,c_2,c_3>0$ which depend only on $C$, such that $A<c_1B^{c_2}+c_3$. If $A\prec_g B$ and $B\prec_g A$ we write $B\sim_g A$. If $A\prec_g B$ we say that $A$ is \emph{polynomially bounded by $B$}.

\subsection{Abelian Varieties}

We fix some notation for abelian varieties:

\begin{itemize}
    \item A \emph{quasi-morphism} between abelian varieties $A,B$ is an element of $\Hom(A,B)\otimes\Q$. 
    \item A \emph{quasi-isogeny} is a quasi-morphism $\phi$ such that for any integer $n$ for which $n\phi$ is a morphism, $n\phi$ has finite kernel.
    \item For a complex abelian variety $\bC$, we denote $L_A:=H_1(A,\Z), V_A:=H_1(A,\Q), T_A:=V_A/L_A$. Note that $T_A\cong A_{\tor}(\bC).$
    \item We denote the Tate module by $T_{A,\ell}:=\varinjlim A[\ell^n].$
\end{itemize}

\subsection{Principal Polarisations and Isogenies}\label{subsection:polariztions}

Recall that a polarization of a complex abelian variety $A$ is an isogeny $\phi:A\ra A^\vee$ which is symmetric, and for which the pullback of the Poincare bundle along the associated graph is ample. Given a polarization, we get an associated symplectic pairing $Q_{A,\phi}$ known as the \textit{Weil pairing} on $L_A$, and also $Q_{A,\phi,n}$ on $A[n]$ for each positive integer $n$. We say that $\phi$ is principal if the degree of $\phi$ is $1$. 

We say that two polarizations $\phi,\psi$ are \emph{equivalent} if there are positive integers $m,n$ such that $m\phi=n\psi$. Given another abelian variety $B$ and an isogeny $f:B\ra A$
we define a polarization on $B$ by setting $f^*\phi:= f^\vee\circ\phi\circ f$. Given two polarized abelian varieties $(A,\phi),(B,\psi)$ we say that an isogeny $f:A\ra B$ is a \emph{polarized isogeny} from $(A,\phi)$ to $(B,\psi)$ if $f^*\psi$ is equivalent to $\phi$. 

If $\phi,\phi'$ are two polarizations on $A$ then we obtain a quasi-isogeny $f:=\phi^{-1}\circ \phi':A\ra A$ such that the induced map on $V_A$ respects the pairing $Q_{A,\phi}$. In particular, 
$$Q_{A,\phi}(v_1,fv_2)=Q_{A,\phi}(fv_2,v_1).$$

\subsection{Shimura Varieties}

See \cite{milnemoduli} for further background on this section.

Let $G$ be a reductive group over $\Q$, $\Ss=\Res_{\bC/\R}\G_m $ the Deligne torus, and $X$ be a conjugacy class of 
homomorphisms $f:\Ss\ra G_\R$ satisfying the Shimura axioms, and $K=\prod_p K_p\subset G(\A_f)$ be a neat compact subgroup. Let $E(G,X)$ be the reflex field, and $E$ be a field over which $G$ splits. Associated to this data we get an algebraic variety $S_K(G,X)$ defined over $E(G,X)$ called a "Shimura Variety" whose complex points can be identified with $G(\Q)\backslash\left(X\times G(\A_f)\right)/K$.

We shall primarily be concerned with $\cA_g$ which is the Shimura variety corresponding to $G=\GSp_{2g}$. This naturally gives $\cA_g$ the structure of the (coarse) moduli space of principally polarized abelian varieties of dimension $g$. In this case $E(G,X)=\Q$ so we consider $\cA_g$ as a $\Q$-variety.

\subsubsection{Weakly Special Varieties}

We follow \cite{UY} in this section:

Suppose $S_K(G,X)$ is a Shimura variety for which we have:

\begin{itemize}
    \item A Shimura subvariety $S_{K_H}(H,X_H)$
    \item Shimura varieties $(H_i,X_i,K_i)$ for $i=1,2$  and a splitting $H^{\ad}\cong H_1\times H_2$ inducing a splitting $X_H\cong X_{H_2}\times X_{H_2}$.
\end{itemize}

Then we say that the image of $X_1^+\times \{y\}$ is a \emph{weakly special} subvariety of $S_K(G,X)$ for any connected component $X_1^+$ of $X_1$ and any point $y\in X_2$. We say that a closed analytic subvariety $S\subset X_G$ is weakly special if it is a connected component of the preimage of a weakly-special variety. 
\subsection{Functional Transcendence Theorems}\label{subsection:transcendence}

We let $\cH_g:=\{Z\in M_g(\bC): Z=Z^t, \im Z>0\}$ denote the Siegel upper-half space, so that there is a natural uniformization map $\pi_g:\cH_g\ra \cA_g(\bC)$. Then we have a semi-algebraic structure on $\cH_g$ induced from the algebraic structure of $M_g(\bC)$. It is easy to show that weakly special varieties $V$ are bi-algebraic for the map $\pi_g$, in the sense that all of the irreducible components of $\pi_g^{-1}(V)$ are semi-algebraic. The Ax-Schanuel Theorem \cite{ASAg} shows that this is the only algebraic information that is preserved by $\pi_g$:

\begin{thm}\label{thm:AS}
    Let $V\subset \cH_g\times \cA_g$ be an irreducible algebraic variety, and let $\Gamma\subset \cH_g\times \cA_g$ be the graph of the map $\pi_g$. Let $U\subset V\cap\Gamma$ be an irreducible component and assume that $\dim U>\dim V+\dim\Gamma - \dim\cA_g$. Then $\pi_g(U)$ is contained in a proper weakly-special subvariety of $\cA_g$.
\end{thm}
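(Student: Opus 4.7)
The plan is to follow the Pila--Tsimerman transcendence strategy developed by Mok--Pila--Tsimerman for general Shimura varieties. Suppose for contradiction that $U\subset V\cap\Gamma$ has the asserted atypical dimension but $\pi_g(U)$ is not contained in any proper weakly special subvariety of $\cA_g$. Write $\Lambda := \Sp_{2g}(\Z)$, and fix a Siegel fundamental domain $\cF\subset\cH_g$ for the $\Lambda$-action. By the theorem of Peterzil--Starchenko, $\pi_g|_\cF$ is definable in the o-minimal structure $\R_{\mathrm{an,exp}}$, so complex-algebraic conditions pulled back through $\pi_g|_\cF$ yield definable sets.

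The heart of the proof is a counting argument on integer translates. Let $\widetilde U$ be an irreducible complex-analytic continuation of $U$ inside $\cH_g\times\cA_g$, and set
\[
\Sigma(T):=\{\gamma\in\Lambda \ : \ \|\gamma\|\leq T,\ \gamma\cdot\widetilde U \text{ meets } \cF\times\cA_g \text{ in a component contained in } V\}.
\]
Using the negative curvature of $\cH_g$, the hyperbolic volume of the lift of $\pi_g(U)$ grows fast enough that a Nevanlinna/volume estimate produces a polynomial lower bound $|\Sigma(T)|\succ T^\delta$ for some $\delta>0$; this step is precisely where the atypical intersection hypothesis is used. On the other hand, $\Sigma(T)$ is the set of integer points of operator norm at most $T$ lying in a definable subset of $\Sp_{2g}(\R)$, so the Pila--Wilkie counting theorem forces $\Sigma(T)$ to contain a positive-dimensional semi-algebraic arc of matrices.

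Translating $\widetilde U$ along this arc gives a real-algebraic family of analytic components of $V\cap\Gamma$; complex-analytic continuation promotes the arc to a positive-dimensional complex algebraic subset of $\Sp_{2g}(\bC)$ stabilising the Zariski closure $W$ of $\widetilde U$ inside $\cH_g\times\cA_g$. Taking the $\Q$-Zariski closure yields a positive-dimensional $\Q$-algebraic subgroup $H\subset\Sp_{2g}$ under which $W$ is stable, and the Ullmo--Yafaev bi-algebraicity characterisation of weakly special subvarieties then forces $\pi_g(U)\subset \pi_g(W)$ to lie in a proper weakly special subvariety, contradicting the hypothesis.

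The main obstacle is the volume lower bound $|\Sigma(T)|\succ T^\delta$. This is where the atypical-intersection hypothesis must really be exploited, and it requires Nevanlinna-type estimates adapted to the Siegel upper half space, balancing the Bergman metric on $\cH_g$ against the Euclidean metric on $\cA_g$. Once this estimate is in hand, the Pila--Wilkie counting theorem and the standard bi-algebraicity characterisation of weakly special subvarieties finish the argument in a relatively routine manner.
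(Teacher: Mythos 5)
The paper does not prove Theorem~\ref{thm:AS} at all: it is quoted verbatim from Mok--Pila--Tsimerman \cite{ASAg} (Ann.\ of Math.\ 2019), where it is the main theorem of a long and technically demanding paper. So there is no ``paper's own proof'' to compare against; the relevant question is whether your sketch would actually constitute a proof. It would not, and the gaps are substantial rather than cosmetic.

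First, the counting set $\Sigma(T)$ as you have written it is not the right object. You ask that $\gamma\cdot\widetilde U$ land inside $V$, but $\gamma V\neq V$ in general, so there is no reason for this set to contain more than $\gamma=1$; nothing in the atypicality hypothesis produces translates of $\widetilde U$ that return to the \emph{same} algebraic variety $V$. In the actual argument one has to work with a definable family (parametrized by translates of $V$, or by the Zariski closure of $\widetilde U$ together with its stabilizer) and count fundamental-domain intersections; the precise bookkeeping here is one of the hard parts of \cite{ASAg}, not a routine setup.

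Second, the volume lower bound $|\Sigma(T)|\succ T^\delta$ is asserted but not obtained, and it cannot be obtained by a single Nevanlinna-type estimate. What the atypical-dimension hypothesis gives you is not directly a volume bound for the lift of $\pi_g(U)$; the \cite{ASAg} proof proceeds by a non-trivial \emph{induction} (on dimension of the ambient Shimura datum and of the atypicality defect), reducing each step to the Ax--Lindemann case, where the relevant set genuinely is weakly special and hyperbolic-volume growth (Hwang--To type estimates) does apply. Your sketch presents Ax--Schanuel as if it followed by the same one-shot counting argument as Ax--Lindemann; it does not, and without the induction the volume step simply does not close.

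Third, the step from a positive-dimensional real semi-algebraic arc in $\Sigma(T)$ to a positive-dimensional $\Q$-algebraic subgroup $H\subset\Sp_{2g}$ stabilizing the Zariski closure $W$, and from there to the conclusion via Ullmo--Yafaev bi-algebraicity, is not automatic. One must show that the monodromy-type group so produced is semisimple (or at least reductive and normal in the relevant Mumford--Tate group); an arbitrary positive-dimensional stabilizer need not define a weakly special subvariety. This is again a genuine group-theoretic argument in \cite{ASAg}, not a formality.

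Given the depth of the cited theorem, the appropriate thing to do in the present paper is exactly what it does: cite \cite{ASAg}. If you want to present an argument, you should make the induction explicit and be precise about the definable family being counted; as written the proposal identifies the right ingredients but does not assemble them into a proof.
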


We shall also require another theorem, which takes into account unlikely intersections with different weakly-special varieties at once. Namely, let $V\subset \cA_g$ be a variety, and we assume it is not contained in any proper weakly-special subvariety. We say that an irreducible subvariety $X\subset V$ is \emph{weakly-atypical} if there is a weakly special subvariety $W\subset\cH_g$ containing $X$such that $$\dim X > \dim W + \dim V - \dim\cA_g.$$ We then have the following:

\begin{thm}\label{thm:GZP}
$V\subset \cA_g$ be a variety which is not contained in any proper weakly-special subvariety. Then there is a proper subvariety $Z\subset V$ which contains all positive-dimensional weakly-atypical subvarieties of $V$, such that for any irreducible component $Z^o\subset Z$ we have
\begin{enumerate}
    \item $Z^o$ is weakly atypical, or
    \item $Z^o$ is contained in a proper special subvariety of $\cA_g$.
\end{enumerate}

\end{thm}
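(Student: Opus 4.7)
The plan is to deduce this theorem from Ax-Schanuel (Theorem~\ref{thm:AS}) via a Noetherian argument over the stratification of $\cA_g$ by special subvarieties. The starting point is the observation that for any positive-dimensional weakly-atypical $X\subset V$ contained in a weakly-special $W\subset\cA_g$, the preimage $\pi_g^{-1}(X)$ sits inside an irreducible component of $\pi_g^{-1}(V)\cap\pi_g^{-1}(W)$. The dimension inequality defining weakly-atypicality translates directly into the hypothesis of Theorem~\ref{thm:AS}, so Ax-Schanuel immediately forces $X$ to be contained in some proper weakly-special subvariety $W'\subsetneq\cA_g$.

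Next I would stratify weakly-special subvarieties by their ``type'', i.e.\ by the conjugacy class of the underlying reductive subgroup $H\subset\GSp_{2g}$ in the definition from \S2.4.1. Since there are only countably many such conjugacy classes, it suffices to handle each type separately. For a fixed type, the weakly-special subvarieties form an algebraic family parameterized by a transverse Shimura datum, and I would call $X\subset V$ \emph{weakly-optimal of type $H$} if it is maximal among weakly-atypical subvarieties of $V$ contained in a weakly-special of type $H$; every weakly-atypical subvariety is contained in such a weakly-optimal one.

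The central claim is that for each type $H$, the Zariski closure $Z_H$ of the union of weakly-optimal subvarieties of type $H$ is a proper subvariety of $V$. If $Z_H=V$ one could extract a positive-dimensional parameter family of weakly-special subvarieties of type $H$ simultaneously yielding weakly-atypical components; applying Theorem~\ref{thm:AS} to the total space of this family forces $V$ itself into a proper weakly-special subvariety, contradicting the hypothesis on $V$. Any component $Z^o$ of $Z_H$ that is not itself weakly-atypical must by construction lie in the intersection of an entire continuous family of weakly-special subvarieties of type $H$, and such an intersection is cut out by Hodge-theoretic conditions coming from a proper reductive subgroup of $\GSp_{2g}$, hence is contained in a proper \emph{special} subvariety of $\cA_g$ — placing $Z^o$ in case (2).

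The main obstacle is precisely the uniformity step: passing from the pointwise Ax-Schanuel conclusion (each weakly-atypical $X$ lies in \emph{some} proper weakly-special) to the global statement that their union is contained in a proper algebraic $Z$. This is exactly the content distinguishing Geometric Zilber-Pink from pointwise Ax-Schanuel, and requires a careful analysis of families of atypical intersections rather than individual ones. Once it is established for each type $H$, Noetherian induction on $\dim V$, applied inside the proper special subvarieties arising in case (2), consolidates the finitely many relevant types into a single proper $Z\subset V$ whose components all satisfy (1) or (2).
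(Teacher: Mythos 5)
The paper proves this in two lines: it invokes Theorem~6.1 of Baldi--Klingler--Ullmo \cite{baldiklinglerullmo} and then observes that, because $\GSp_{2g}^{\ad}$ is simple, case~(ii) of that theorem forces $Z^o$ to lie in a proper \emph{special} (not merely weakly special) subvariety. Your proposal instead attempts a self-contained derivation following the general strategy of Baldi--Klingler--Ullmo (stratification by the type $H$ of the weakly special witness, weakly optimal subvarieties, a finiteness/uniformity argument). As a sketch of that strategy it is faithful, but it does not constitute a proof, and the gaps are precisely at the points you yourself flag as hard.

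Concretely: your ``central claim'' --- that $Z_H = V$ would let one ``extract a positive-dimensional parameter family'' and then ``apply Theorem~\ref{thm:AS} to the total space of this family'' to contradict the hypothesis on $V$ --- cannot be obtained from Theorem~\ref{thm:AS} as stated. That theorem is the pointwise Ax--Schanuel for $\cH_g \times \cA_g$; what the family argument requires is a fibered (or ``weak'') Ax--Schanuel statement over the parameter space of weakly specials of type $H$, plus a monodromy argument to identify the resulting weakly special, and this is exactly the technical core of \cite[\S6]{baldiklinglerullmo} (and of Daw--Ren's earlier work in the modular setting). Asserting it is not proving it. There are two further unaddressed points: (a) why only finitely many types $H$ contribute, so that the union $\bigcup_H Z_H$ is still a proper algebraic subvariety --- this requires the boundedness-of-monodromy / finitely-many-Hodge-generic-$H$ argument, which does not come for free from Noetherian induction on $\dim V$; and (b) the step ``lying in a continuous family of weakly specials of type $H$ is cut out by Hodge-theoretic conditions from a proper reductive subgroup, hence contained in a proper special'' is the place where simplicity of $\GSp_{2g}^{\ad}$ is actually used and needs an argument, not just an appeal to intuition. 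As written, your proposal reproduces the statement of the uniformity that Geometric Zilber--Pink asserts rather than proving it; you should either supply the family Ax--Schanuel and monodromy inputs explicitly, or, as the paper does, cite \cite{baldiklinglerullmo} directly.
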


\begin{proof}
    Note that $V$ itself is not weakly atypical, by our assumptions. The result now follows directly from \cite[Thm 6.1]{baldiklinglerullmo} by noting that the adjoint group of $\GSp_{2g}$ is simple, and therefore for case (ii) in Theorem 6.1 of that paper to occur $Z^o$ must be contained in a proper special subvariety.
\end{proof}

The above is named the Geometric Zilber-Pink conjecture for the weakly special atypical locus by \cite{baldiklinglerullmo}, and is proven there in the more general context of hodge variations.

\subsection{Pila-Wilkie Theorem}

Recall that the height of a rational point $\frac ab$ with $\gcd(a,b)=1$ is $H(\frac ab):=\max(|a|,|b|)$. For a point $q=(q_1,\dots,q_n)\in\Q^n$ we define
$H(q):=\max_i H(q_i)$. Finally, for any subset $X\subset\R^n$ we define $$N(X,T):=\#\{q\in X\cap \Q^n\mid H(q)\leq T\}.$$

Given a set $X\subset \R^n$, we set $X^{\alg}$ to be the union of all connected, positive dimensional, semi-algebraic subsets of $X$. Then we have the following  Theorem of Pila--Wilkie:

\begin{thm}[Pila--Wilkie {\cite[Thm 1.8]{PW}}]\label{pointcount}
For a set $X\subset\R^n$ definable in an o-minimal structure, and any $\epsilon>0$, we have $$N(X-X^{\alg},T) =  T^{o(1)}.$$
\end{thm}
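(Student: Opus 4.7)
The plan is to follow the Pila--Wilkie strategy: combine an o-minimal parametrization theorem with a Bombieri--Pila style determinant counting lemma, and induct on $\dim X$. Applying definable cell decomposition, I would first reduce to bounding $N(X,T)$ when $X = f((0,1)^k)$ is the image of a single definable analytic map with $k = \dim X$, treating the lower-dimensional cells by induction on dimension.

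The first ingredient is the $C^r$-parametrization theorem for definable sets: for every integer $r$, there exist $C(r)$ definable analytic maps $\phi_i\colon (0,1)^k \to \R^n$ whose images cover $X$ and whose partial derivatives up to order $r$ are bounded by $1$ in sup-norm, with $C(r)$ depending only on $r$ and the ambient o-minimal structure. This is the o-minimal analogue of the Yomdin--Gromov parametrization theorem, and is the main obstacle: it requires a delicate iteration of cell decomposition together with reparametrization arguments that absorb large derivatives via monotone definable substitutions.

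The second ingredient is a Bombieri--Pila style determinant lemma: on each parametrized piece $\phi_i$, the rational points of height $\leq T$ either all lie on a single algebraic hypersurface of degree $D(r,k)$, or else number at most $c(r,k)\, T^{\delta(r)}$, with $\delta(r) \to 0$ as $r \to \infty$. Choosing $r$ large in terms of $\epsilon$, the non-algebraic case contributes at most $T^\epsilon$ points per piece, and summing over the $C(r)$ pieces still gives $T^{o(1)}$.

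In the algebraic case the rational points lie in $X \cap H$ for an algebraic hypersurface $H$; this intersection is definable and has strictly smaller dimension than $X$. Any positive-dimensional connected semi-algebraic subset of $X \cap H$ is automatically contained in $X^{\alg}$, so it contributes nothing to $N(X - X^{\alg}, T)$, and one may induct on $\dim X$ after passing to the proper definable subset obtained by removing the smooth semi-algebraic pieces of $X \cap H$. With the trivial base case $k=0$, combining the parametrization and counting lemmas through this induction yields the bound $N(X - X^{\alg}, T) = T^{o(1)}$.
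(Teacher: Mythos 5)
The paper does not prove Theorem~\ref{pointcount}; it is cited directly from Pila--Wilkie~\cite{PW} and used as a black box, so there is no internal proof to compare your sketch against. Your outline does reproduce the skeleton of the original Pila--Wilkie argument (o-minimal $C^r$-parametrization in the style of Yomdin--Gromov, a Bombieri--Pila determinant lemma on each parametrized piece, and induction on dimension), which is the right reference to internalize since the present paper simply invokes the result.

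Two points in your sketch deserve tightening if you intend it as an actual proof rather than a roadmap. First, the determinant lemma you state is not the form Pila and Wilkie use: the correct statement is that the height-$\leq T$ rational points on a parametrized piece are covered by at most $c(n,k,r)\,T^{\delta(r)}$ hypersurfaces of degree $d(n,k,r)$, not a dichotomy between a single hypersurface and few points; the dichotomy version can be extracted only after an additional subdivision of the domain, and the bookkeeping is different. Second, your inductive step quietly assumes that the $T^{o(1)}$ bound for the lower-dimensional sets $X\cap H$ is uniform as $H$ ranges over the (potentially $T$-dependent) hypersurfaces produced by the determinant lemma. This uniformity is exactly why Pila--Wilkie prove the theorem for definable \emph{families} rather than a single set; the paper in fact needs and cites that family version separately as Theorem~\ref{pointcount2}. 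Without building the family version into the induction, the argument does not close.
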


In fact, we shall need the slightly stronger version, valid in families.

\begin{thm}[Pila--Wilkie {\cite[Thm 3.6]{PAOmodular}}]\label{pointcount2}
For a set $X\times Y\subset\R^n$ definable in an o-minimal structure, we have that 
$$N(X_y-X_y^{\alg},T)=T^{o(1)}$$ uniformly in $y\in Y$.

\end{thm}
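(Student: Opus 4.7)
The plan is to reprove the absolute Pila--Wilkie theorem from \cite[Thm 1.8]{PW}, keeping careful track of uniformity in the auxiliary parameter $y \in Y$. The strategy has three stages, and the claimed uniformity will be automatic provided it can be established in the central technical step.

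First, I would apply definable cell decomposition to $X \times Y \subset \R^{n+m}$ to reduce to the case where each fiber $X_y$ is a finite disjoint union of cells of fixed types and dimensions, with the combinatorial structure of the decomposition uniform in $y$ (such uniformity is a standard consequence of o-minimality for definable families). A similar uniform decomposition identifies the algebraic part $X_y^{\alg}$ with those cells through which a positive-dimensional semialgebraic arc passes, and one can restrict attention to the transcendental cells.

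The central ingredient is the family version of the Gromov--Yomdin--Pila--Wilkie $C^r$-parameterization theorem: for every integer $r \geq 1$, there exist finitely many definable maps $\phi_i : Y \times (0,1)^{d_i} \to \R^n$, with the number of maps, the $d_i$, and the derivative bounds depending only on the family and $r$, such that the images $\phi_{i,y}((0,1)^{d_i})$ cover $X_y \setminus X_y^{\alg}$ and every partial derivative of $\phi_{i,y}$ of order at most $r$ is bounded in sup-norm by $1$, uniformly in $y$. Granting this, the Bombieri--Pila determinant method applied fiberwise shows that the rational points of height $\leq T$ in each $\phi_{i,y}((0,1)^{d_i})$ lie on a union of at most $C_r\, T^{c/r}$ real-algebraic hypersurfaces of bounded degree, with $C_r$ and $c$ depending only on the family. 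Intersecting $X_y$ with each such hypersurface produces a definable family of strictly smaller fiber-dimension; by induction on $\dim X_y$ the non-algebraic points of these intersections contribute $T^{o(1)}$ further rational points of height $\leq T$, uniformly in $y$. Choosing $r = r(\epsilon)$ sufficiently large as a function of $\epsilon$ then yields the desired bound $N(X_y - X_y^{\alg},T) \ll_\epsilon T^\epsilon$, again uniformly.

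The main obstacle is establishing the uniform parameterization theorem in Stage 2. The absolute version is already a delicate induction on dimension with careful control of derivatives, and the family version requires organizing the inductive cell decompositions so that at each step all splittings, reparameterizations, and derivative bounds depend only definably on $y$ (so that by o-minimality they are bounded in $y$). Once this uniform parameterization is in hand, the determinant method and the inductive bookkeeping in Stage 3 go through exactly as in the absolute Pila--Wilkie argument, with every constant depending only on the ambient definable family and not on the particular fiber.
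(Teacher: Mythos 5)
This is not a statement the paper proves: it is cited directly as \cite[Thm 3.6]{PAOmodular}, a known uniform (family) version of the Pila--Wilkie theorem, so there is no ``paper's own proof'' to compare against. What you have written is a reasonable high-level reconstruction of how that theorem is in fact proved in the literature: uniform cell decomposition, a family version of the $C^r$-reparameterization lemma, the Bombieri--Pila determinant method applied fiberwise, and induction on fiber dimension. In that sense your outline is correct, and it matches the architecture of the proofs in \cite{PW} and \cite{PAOmodular}.

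Two remarks. First, the ``main obstacle'' you flag --- a uniform-in-$y$ reparameterization theorem with bounds depending only on the ambient definable family --- is not actually an open gap: it is established in \cite{PW} precisely in family form (the reparameterization lemma is stated and proved for definable families, which is where the whole uniformity of the theorem comes from). So your Stage 2 is a matter of citation, not of new work. Second, the induction in Stage 3 is slightly quicker than the real argument: after intersecting $X_y$ with the auxiliary hypersurfaces the relevant set is a new definable family, and one must be careful about the interaction between the algebraic part of the intersection and $X_y^{\alg}$ (this is exactly why \cite{PAOmodular} phrases the uniform result in terms of a definable family of ``blocks'' rather than simply $X_y^{\alg}$). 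The paper you are reading only needs the cited statement as a black box, so none of this matters for the argument at hand, but if you were actually to write out the proof you would want to adopt the block formulation to make the dimension induction airtight.
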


We shall only use the o-minimal structure $\R_{an,\exp}$ in which all the sets we work with are defined. For background on o-minimality see \cite{Dries}.

\subsection{Height Estimates}
We use \cite{PTsurfaces} for background. 

We work in the Siegel Upper-Half space $\Hh_g$ of symmetric $g\times g$ complex matrices with positive definite imaginary component. 

for a matrix $Z=X+iY\in\Hh_g$ we define its height to be 
$$H(Z):=\max\big(|z_{i,j},\frac1{|Y|}\big).$$

We set $\cF_g\subset\Hh_g$ to be the standard fundamental domain\cite{siegelsympleticbook}. The following is \cite[Lemma 3.2]{PTsurfaces}

\begin{lemma}\label{lem:fundheightbound}
    Let $Z\in\bH_g$ and $\gamma\in\GSp_{2g}(\Z)$ such that $\gamma Z\in \cF_g$. Then the co-ordinates of $\gamma$ and the Height of $\gamma Z$ are $\prec_g H(Z)$.
\end{lemma}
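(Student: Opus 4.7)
The plan is to exploit the classical transformation law for the $\Sp_{2g}(\R)$-action on $\Hh_g$ together with the defining properties of the fundamental domain $\cF_g$. Write $M := CZ+D$, $Y := \im Z$, and $Y' := \im(\gamma Z)$; the cocycle formula gives $\bar M^T Y' M = \mu \cdot Y$, where $\mu$ is the similitude character of $\gamma$. Two features of $\cF_g$ drive the argument: maximality of $\det \im$ in the $\Sp_{2g}(\Z)$-orbit implies $\det Y' \geq \det Y$ (hence $|\det M|$ is bounded above), while Minkowski reducedness of $Y'$ gives $\lambda_1(Y') \geq c_g > 0$ for an explicit constant $c_g$.

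Combining the cocycle identity with $\lambda_1(Y') \geq c_g$ gives, for any $v\in\R^g$,
\[
\lambda_1(Y')\|Mv\|^2 \leq (Mv)^* Y'(Mv) = \mu\cdot v^T Y v \leq \mu\|Y\|\|v\|^2,
\]
so $\|M\| \prec_g H(Z)^{1/2}$. Separating $M = (CX+D) + i(CY)$, we recover $C = \im(M) Y^{-1}$ and $D = \text{Re}(M) - CX$, and the cofactor estimate $\|Y^{-1}\| \prec_g H(Z)^g$ (which follows from $\det Y \geq 1/H(Z)$ and $\|Y\| \leq H(Z)$) yields $\|C\|, \|D\| \prec_g H(Z)$. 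For $A, B$, differentiating the action $\gamma Z = (AZ+B) M^{-1}$ in $Z$ and using the symplectic identity $MC^T = CM^T$ produce
\[
A = (\gamma Z)\, C + M^{-T}, \qquad B = (\gamma Z)\, D - M^{-T} Z,
\]
reducing the problem to bounding $\|\gamma Z\|$ (equivalently $\|Y'\|$, since $\|\text{Re}(\gamma Z)\| \leq 1/2$) and $\|M^{-1}\|$.

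Applying the operator-norm bound of the previous step to the pair $(\gamma^{-1}, \gamma Z)$, and invoking the cocycle identity $j(\gamma^{-1}, \gamma Z) = M^{-1}$, gives $\|M^{-1}\|^2 \prec_g \|Y'\|\cdot H(Z)^g$. Coupling this with the Minkowski-reduction estimate $\|Y'\| \prec_g \det Y'$ (obtained from $\lambda_1(Y') \geq c_g$ and $\prod_i \lambda_i(Y') = \det Y'$) and the determinant identity $|\det M|^2 = \mu^g \det Y/\det Y'$, a careful analysis yields polynomial bounds $\|Y'\|, \|M^{-1}\| \prec_g H(Z)$; consequently $\|A\|, \|B\| \prec_g H(Z)$ and $H(\gamma Z) = \max(|(\gamma Z)_{ij}|, 1/\det Y') \prec_g H(Z)$. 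The main obstacle is the coupling between $\|Y'\|$ and $\|M^{-1}\|$, which would be circular without exploiting the Minkowski reduction of $Y'$ to convert control of $\det Y'$ into control of $\|Y'\|$ itself.
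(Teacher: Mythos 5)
The paper itself gives no proof of this lemma: it simply cites \cite[Lemma~3.2]{PTsurfaces}, so there is no ``paper's own proof'' to compare against. Your proposal is an attempt at a self-contained argument, and a good deal of it is correct and uses the right tools: the cocycle relation $\bar M^T Y' M = \mu Y$, the lower bound $\lambda_1(Y')\geq c_g$ from the Siegel/Minkowski conditions defining $\cF_g$, the bound $\|M\|\prec_g H(Z)^{1/2}$ from those two facts, the extraction of $C,D$ from $\text{Re}(M),\im(M)$, and the (correctly verified) symplectic identities giving $A=(\gamma Z)C+M^{-T}$, $B=(\gamma Z)D-M^{-T}Z$. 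The estimate $\|Y'\|\prec_g\det Y'$ from Minkowski reduction is also correct.

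The gap is in the final paragraph, where you assert that ``a careful analysis yields polynomial bounds $\|Y'\|,\|M^{-1}\|\prec_g H(Z)$.'' The three ingredients you propose to combine, namely $\|M^{-1}\|^2 \leq \|Y'\|/\lambda_1(Y)\prec_g \|Y'\|H(Z)^g$, the estimate $\|Y'\|\prec_g\det Y'$, and the determinant identity $|\det M|^2=\det Y/\det Y'$, genuinely do not close the loop. Feeding $|\det M|\geq \|M^{-1}\|^{-g}$ into the determinant identity gives $\det Y'\leq \det Y\cdot\|M^{-1}\|^{2g}$, hence $\|Y'\|\prec_g H(Z)^g\|M^{-1}\|^{2g}$, and substituting back into the first bound gives $\|M^{-1}\|^2\prec_g H(Z)^{2g}\|M^{-1}\|^{2g}$. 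For $g\geq 2$ this is a \emph{lower} bound on $\|M^{-1}\|$, not an upper one; the circularity you flag is not removed by the Minkowski-reduction step. What is actually missing is an independent lower bound on $\sigma_{\min}(CZ+D)$ (equivalently a lower bound on $|\det(CZ+D)|$), polynomial in $H(Z)$. The correct input here is the real-vector identity $\|Mv\|^2=\|(CX+D)v\|^2+\|CYv\|^2$ together with a separate argument controlling the smallest singular value of the stacked matrix $\begin{pmatrix}CY\\ CX+D\end{pmatrix}$ — this is not a formal consequence of the facts you list, and it is precisely the technical content that makes the cited lemma nontrivial. In the reference the bound is obtained by a more careful pass through symplectic reduction theory rather than by the determinant bootstrap you sketch.
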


As a result, we have the following simple consequence:

\begin{corollary} \label{cor: isogenyheightbound}
    Let $Z,W\in \cF_g$ be such that there is a degree $N$ polarized-isogeny $\phi:A_Z\ra A_W$. Then $H(W)\prec_g H(Z)+N$. Moreover, there is an element $M\in \GSp_{2g}(\Q)$ such that $M Z=W$ and $H(M)\prec_g N$.
\end{corollary}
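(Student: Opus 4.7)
My plan is to realize the isogeny $\phi$ as an element of $\GSp_{2g}(\Q)$ acting on $\cH_g$ via the standard M\"obius action and sending $Z$ to $W$, then bound its height using a symplectic Cartan decomposition together with Lemma~\ref{lem:fundheightbound}.

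First I would observe that the polarized isogeny $\phi:A_Z\to A_W$ of degree $N$ induces a $\Q$-linear map $\phi_\ast$ on $H_1$'s preserving the symplectic Weil pairings up to a positive rational scalar $n$ (the polarization multiplier, satisfying $n^g=N$). Choosing integral symplectic bases of $L_Z,L_W$ compatible with the period matrices $Z,W\in\cF_g$ yields a matrix $\tilde M\in\GSp_{2g}(\Z)$ with $|\det\tilde M|=N$, similitude $n$, and M\"obius action $\tilde M\cdot Z=W$. By the symplectic Smith decomposition one can write $\tilde M=\gamma_1 D\gamma_2$ with $\gamma_1,\gamma_2\in\Sp_{2g}(\Z)$ and $D=\diag(a_1,\ldots,a_g,n/a_1,\ldots,n/a_g)$ a diagonal `Hecke representative' with $a_1\mid\cdots\mid a_g\mid n$, so that $H(D)\leq n\leq N$.

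The identity $\tilde MZ=W$ then rewrites as $D(\gamma_2 Z)=\gamma_1^{-1}W$. The M\"obius action of $D$ is the explicit entry-wise rescaling $(DZ')_{ij}=(a_ia_j/n)Z'_{ij}$, which gives $H(DZ')\prec_g N\cdot H(Z')$ together with an analogous bound for $(\Im DZ')^{-1}$. Applying Lemma~\ref{lem:fundheightbound} to $\gamma_2^{-1}$ (which sends $\gamma_2 Z$ back to $Z\in\cF_g$) and to $\gamma_1$ (which sends $\gamma_1^{-1}W$ back to $W\in\cF_g$) bounds $H(\gamma_1),H(\gamma_2)$ in terms of $H(\gamma_2 Z),H(\gamma_1^{-1}W)$; a bootstrap using the link $D(\gamma_2 Z)=\gamma_1^{-1}W$ together with the two-sided rigidity $Z,W\in\cF_g$ then closes up the inequalities and yields $H(\tilde M)\prec_g N$. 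Setting $M:=\tilde M$ gives the ``moreover'' assertion.

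The main obstacle will be the bootstrap argument: the na\"ive application of Lemma~\ref{lem:fundheightbound} only controls $H(\gamma_i)$ in terms of heights of shifted points which a priori could grow with $H(Z)$, whereas the target bound $H(M)\prec_g N$ must be free of $H(Z)$. The crux is that the two shifted points $\gamma_2Z$ and $\gamma_1^{-1}W$ are linked by the cleanly computable diagonal $D$, and the constraint that both $Z$ and $W$ lie in $\cF_g$ prevents the auxiliary $\gamma_i$ from being large -- morally, an integer symplectic-similitude matrix with huge entries applied to $Z\in\cF_g$ would map it far outside $\cF_g$. Making this precise is the delicate step. Once $H(M)\prec_g N$ is in hand, the bound $H(W)\prec_g H(Z)+N$ follows directly from the M\"obius formula $W=(AZ+B)(CZ+D)^{-1}$ combined with the imaginary-part transformation law $|\det(CZ+D)|^2=N\det\Im Z/\det\Im W$ and the uniform positive lower bound on $\det\Im Z,\det\Im W$ for points in the Siegel fundamental domain $\cF_g$.
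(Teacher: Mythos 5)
Your plan correctly reduces to showing $H(M)\prec_g N$ for a rational symplectic similitude $M$ with $MZ=W$ and $Z,W\in\cF_g$, but the ``bootstrap'' you flag as the delicate step is in fact a genuine gap, and it cannot be closed by elementary manipulation of Lemma~\ref{lem:fundheightbound}. The issue is exactly the one you identify: Lemma~\ref{lem:fundheightbound} controls $H(\gamma_1),H(\gamma_2)$ only in terms of $H(\gamma_2 Z)$ and $H(\gamma_1^{-1}W)$, and there is no a priori bound on these intermediate heights independent of $H(Z)$. Your ``moral'' claim that an integral symplectic element of large height applied to a point of $\cF_g$ must leave $\cF_g$ is precisely the \emph{Siegel property} for $\GSp_{2g}$; it is not a formal consequence of being in a fundamental domain (indeed it fails for general arithmetic groups) and is the nontrivial content of Orr's theorem \cite{orr}. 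The paper's proof does not attempt a bootstrap at all: it cites \cite[Thm 1.1]{orr} directly to conclude $H(M)\prec_g N$.

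For the bound $H(W)\prec_g H(Z)+N$, the paper's route is also different and more direct: it writes $M=\gamma\gamma_0$ with $\gamma\in\GSp_{2g}(\Z)$ and $\gamma_0$ a coset representative for the Hecke operator of degree $N$ with entries $\prec_g N$ (using the explicit representatives of \cite{explicitheckecosets} for prime powers and multiplicativity), so $H(\gamma_0 Z)\prec_g H(Z)+N$ and then Lemma~\ref{lem:fundheightbound} applied to $\gamma$ immediately gives $H(W)\prec_g H(Z)+N$. Your alternative deduction of the $H(W)$ bound from $H(M)\prec_g N$ via the M\"obius formula also needs more care than stated: to bound the entries $|w_{ij}|$ you need a \emph{lower} bound on $|\det(CZ+D)|$, and the identity $|\det(CZ+D)|^2=\nu^g\det\Im Z/\det\Im W$ together with $\det\Im W\geq c_g$ gives an \emph{upper} bound, not the lower bound you want; one would instead use $\det\Im Z\geq c_g$ and an upper bound on $\det\Im W$ (which itself must be extracted, e.g.\ from the determinant identity the other way). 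This is fixable but is not the ``direct'' consequence you claim. In short: your structural setup (Smith decomposition and reduction to a Siegel-property statement) is sound and essentially parallel to the paper, but the key bound $H(M)\prec_g N$ is not an elementary bootstrap --- it is Orr's theorem, which the paper cites --- and the deduction of the $H(W)$ bound needs either the paper's one-sided coset argument or a more careful M\"obius-formula computation.
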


\begin{proof}
    The existence of $\phi$ means there is an element $M\in \GSp_{2g}(\Q)\cap M_{2g}(\Z)$ with determinant $N$ such that $MZ=W$. It is well-known that these break up into finitely many right $\GSp_{2g}(\Z)$ cosets, and we may write $M=\gamma\gamma_0$ for $\gamma\in\GSp_{2g}(\Z)$ and $\gamma_0$ having entries bounded by $N^{O_g(1)}$. The bound on $\gamma_0$ follows for $N$ a prime power by the explicit coset representatives computed in \cite{explicitheckecosets}, and in general by multiplying and using that $(2g)^{\omega(N)}=N^{o_g(1)}$. 
This implies that $H(\gamma_0 Z)\prec (H(Z)+N)$, and so the bound on $H(W)$ follows by Lemma \ref{lem:fundheightbound}. 

Finally it remains to prove the bound on $H(M)$. This follows directly from \cite[Thm 1.1]{orr} for the group $G=\GSp_{2g}$ with its canonical representation. 

    \end{proof}

\subsection{Galois Representations}

Let $A$ be an abelian variety over a number field $K$. We get an induced Galois representation $\rho_{A,\ell}:G_K\ra\GL(T_{A,\ell})$ for each prime $\ell$, and therefore $\rho_A:G_K\ra\GL(T_A)$. This must preserve every polarization and endomorphism defined over $K$. For each $\ell$, we obtain an $\ell-adic$ Mumford-Tate Group $MT_{A,\ell}$ by taking the Zariski closure of the image of $\rho_{A,\ell}$. Note that the $\ell$-adic Mumford-Tate group depends on the number field but its identity component (as an algebraic group) does not. 

Given a polarized abelian variety $(A,\phi)$, we say that $A$ is \emph{Galois Generic} if the image of $\rho_A$ is open in $\GSp(T_A,\phi)$. We remark that this is independent of $\phi$ since this property implies $\End(A)=\Z$ and thus that $A$ has a single polarization up to equivalency.

We have the following result (see \cite[Remark17.3.1]{BLAV})
 \begin{prop}\label{prop:abshodge}
   The identity component of $MT_{A,\ell}$ is contained in the extension of scalars of the ordinary Mumford-Tate group $MT_A$ from $\Q$ to $\Q_{\ell}$.
   \end{prop}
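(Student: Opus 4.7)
My plan is to prove this via the Tannakian formalism together with Deligne's theorem that Hodge cycles on abelian varieties are absolutely Hodge. The target inclusion is an assertion about invariant tensors, so first I would recast both sides of the claim in those terms. The Mumford--Tate group $MT_A \subset \GL(V_A)$ is by definition the $\Q$-Zariski closure of the image of the Deligne torus $\Ss \to \GL(V_A \otimes \R)$, and it admits the Tannakian characterization as the subgroup of $\GL(V_A)$ fixing every Hodge tensor in each mixed tensor power $V_A^{\otimes m} \otimes (V_A^{\vee})^{\otimes n}$. Dually, $MT_{A,\ell}$ is defined as the $\Q_\ell$-Zariski closure of the image of $\rho_{A,\ell}$ in $\GL(T_{A,\ell}\otimes\Q_\ell)$, and its invariants in the corresponding tensor powers are precisely the $G_K$-invariant $\ell$-adic tensors.

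The key step is to show that every Hodge tensor $t \in V_A^{\otimes m} \otimes (V_A^{\vee})^{\otimes n}$, viewed in the $\ell$-adic realization via Artin's comparison isomorphism $V_A \otimes \Q_\ell \cong T_{A,\ell} \otimes \Q_\ell$, is fixed by the identity component $MT_{A,\ell}^0$. For this I would invoke Deligne's theorem that Hodge cycles on abelian varieties are absolutely Hodge: this implies that the $\ell$-adic realization of $t$ is fixed by an open subgroup of $G_K$, and hence (replacing $K$ by a finite extension if necessary, which does not change the identity component of $MT_{A,\ell}$) by all of $\rho_{A,\ell}(G_K)$. By Zariski closure, $t$ is then fixed by $MT_{A,\ell}$, and in particular by $MT_{A,\ell}^0$.

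By the Tannakian characterization of $MT_A$ as the stabilizer of all Hodge tensors, the conclusion of the previous paragraph gives $MT_{A,\ell}^0 \subset MT_A \otimes_{\Q} \Q_\ell$, which is exactly the claimed inclusion.

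The main obstacle is of course Deligne's absolute Hodge theorem, which is the deep ingredient powering the argument; everything else is formal Tannakian manipulation. One minor technical point to handle carefully is that the identity component $MT_{A,\ell}^0$ does not depend on the field of definition $K$ (as noted in the excerpt), so one is free to replace $K$ by any finite extension to trivialize the finite-order Galois action on Hodge tensors.
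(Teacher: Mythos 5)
Your argument is correct and is the standard proof of this fact. The paper itself does not give a proof of Proposition \ref{prop:abshodge}; it simply cites \cite[Remark 17.3.1]{BLAV}, and the argument you sketch (Tannakian characterization of $MT_A$ as the stabilizer of Hodge tensors, Deligne's absolute Hodge theorem to deduce that those tensors are Galois-invariant up to finite index, and the observation that passing to a finite extension of $K$ does not change the identity component of $MT_{A,\ell}$) is precisely the argument behind that reference. Two small points worth being explicit about: (i) the Tannakian characterization of $MT_A$ as the pointwise stabilizer of Hodge tensors in the spaces $V_A^{\otimes m}\otimes (V_A^\vee)^{\otimes n}$ uses the reductivity of $MT_A$ (polarizability of the Hodge structure), and in principle one needs Tate twists as well, but for a polarized abelian variety $\Q(-1)$ is realizable as a summand of $V_A^{\otimes 2}$ so nothing extra is needed; (ii) the step ``absolutely Hodge $\Rightarrow$ fixed by an open subgroup of $G_K$'' relies on the facts that absolute Hodge cycles form a finite-dimensional $\Q$-vector space stable under the Galois action and that each is defined over a finite extension of $K$, so that a basis is fixed after a finite extension. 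Neither of these is a gap, but naming them makes the proof self-contained.
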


\subsection{Counting by Height}\label{subsection:counting}

The proof of our Theorem requires a notion of `most' abelian varieties, so we need a way to count a subset of points  $\cA_g$ which are infinite, easy to get at, and have generic behaviour. To do this we borrow the mechanism used by Masser and Zannier\cite{MZjacobians}. We fix an open set $U\subset \cA_g$ and a quasi-finite dominant map $\phi:U\ra\Pp^{\dim\cA_g}$ defined over $\Q$. We may then count $\phi$-rational points by defining
$$S_{\phi}(T):=\{t\in \cA_g(\Qbar)\mid \phi(t)\in U(\Q), H(\phi(t))\leq T\} $$ for each $T>0$, where $H$ is the usual height function on projective space. Note that the abelian varieties in $S_{\phi}(T)$ are defined over number fields of bounded degree.

We call such a $\phi$ a \emph{counting function} on $\cA_g$, and we say that a property holds \emph{$\phi$-generically} on $\cA_g$ if the proportion of points in $S_{\phi}(T)$ that it holds for tends to  1 as $T\ra\infty$.

\section{Galois Orbits}

Let $A$ be an abelian variety of dimension $g$ defined over a number field $K$. Let $\rho_A:G_K\ra \GL_{2g}(\hat{\Z})$ be the (isomorphism class of the ) Galois representation associated to an abelian variety. Of course, if it is principally polarized then the image is contained in $\GSp_{2g}(\hat\Z)$. If the image of $\rho_A$ has finite index in $\GSp_{2g}(\hat\Z)$ then we call its index the \emph{index of $A$}. Note that this implies that $A$ is simple.

We recall the following theorem:

\begin{prop}\label{prop:openimage}\cite[Theorem 1.1]{Zywina}

Fix a counting function $\phi$ as in \ref{subsection:counting}. Then $\phi$-generically, for an abelian variety $A\in\cA_g$ we have that the index $[\GSp_{2g}(\hat\Z):\im\rho_A]$
is finite of size $O_g(1)$. It follows in particular that $A$ is Hodge-generic.

\end{prop}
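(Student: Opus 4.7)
The plan is to invoke Zywina's Theorem 1.1 \cite{Zywina} more or less verbatim, since the counting-function framework of \S\ref{subsection:counting} was tailored to match his setup. If I were reproving this from scratch, I would first translate the counting problem: a point $t \in S_\phi(T)$ corresponds to a preimage in $U$ of a $\Q$-rational point of height $\leq T$ on $\Pp^{\dim \cA_g}$, so quasi-finiteness of $\phi$ implies that the abelian variety $A_t$ is defined over a number field of degree at most $\deg\phi$, and standard height comparisons bound its Faltings height polynomially in $T$. In this setup $|S_\phi(T)|$ grows like $T^{\dim \cA_g + 1}$ up to logarithmic factors.

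Next, I would run a large sieve over mod-$\ell$ Galois representations. For each prime $\ell$, the locus $V_\ell \subset \cA_g$ where the mod-$\ell$ image $\rho_{A_t, \ell}(G_{K(t)})$ fails to be surjective onto $\GSp_{2g}(\F_\ell)$ is a proper Zariski-closed subvariety, because the geometric $\ell$-adic monodromy of the universal family on $\cA_g$ is known to equal $\Sp_{2g}(\Z_\ell)$. A uniform upper bound on the number of $\phi$-rational points of height at most $T$ lying on $V_\ell$ --- the main technical input --- followed by summation over $\ell$ in an appropriate range, would show that for all but a $o(1)$-fraction of $t \in S_\phi(T)$, the mod-$\ell$ representation $\rho_{A_t,\ell}$ is surjective for every prime $\ell$ outside an exceptional set of size $O_g(1)$.

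Finally, I would translate this into bounded adelic index via Serre's structural theorem for closed subgroups of $\GSp_{2g}(\hat\Z)$ that surject modulo almost every prime: any such subgroup has index bounded in terms of $g$ and the exceptional set of primes, yielding $[\GSp_{2g}(\hat\Z) : \im \rho_{A_t}] = O_g(1)$. Hodge-genericity is then automatic, since openness of $\im \rho_A$ in $\GSp_{2g}(\hat\Z)$ forces the identity component of each $\ell$-adic Mumford-Tate group to be $(\GSp_{2g})_{\Q_\ell}$, and by Proposition \ref{prop:abshodge} this implies $MT_A = \GSp_{2g}$.

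The hard part is the uniform sieve bound in the second step: controlling $|V_\ell \cap S_\phi(T)|$ uniformly in $\ell$ requires a careful application of the large sieve together with an explicit classification of the proper maximal subgroups of $\GSp_{2g}(\F_\ell)$. This is precisely the content of \cite{Zywina}, and we content ourselves with invoking his theorem directly.
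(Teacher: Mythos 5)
Your high-level plan is right: this is an application of Zywina's work, and your large-sieve sketch accurately summarizes what his Theorem 1.1 does. But there is a gap in the translation step, which is in fact the only nontrivial content of the paper's (very short) proof.

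You correctly note that for $t \in S_\phi(T)$ the abelian variety $A_t$ is defined over a number field $K(t)$ of degree $\leq \deg\phi$, but then your step two talks of a locus $V_\ell \subset \cA_g$ ``where the mod-$\ell$ image $\rho_{A_t,\ell}(G_{K(t)})$ fails to be surjective.'' This locus is not well-defined as a subvariety of $\cA_g$: the mod-$\ell$ Galois image depends on the field of definition and the choice of model, not merely on the moduli point, and $\cA_g$ carries no universal family to begin with. More to the point, Zywina's Theorem 1.1 applies to a family of abelian varieties over a $\Q$-variety and counts $\Q$-rational points of the base; the points of $S_\phi(T)$ lie over $\Q$-points of $\Pp^{\dim\cA_g}$ but are themselves only defined over bounded-degree extensions. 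To bridge this gap one must first pass to a Galois cover of $U \to \Pp^{\dim\cA_g}$ carrying an actual abelian scheme, and then apply Theorem 1.1 to the Weil restriction of scalars of that family down to $\Q$, so that the relevant count becomes a count of $\Q$-rational points. One then recovers openness of the original image inside $\GSp_{2g}(\hat\Z)$ from openness of the image for the restriction of scalars. This is precisely the reduction the paper invokes (``by applying Theorem 1.1 to the restriction of scalars after passing to a Galois cover''), and it is what your proposal omits. Your concluding paragraph about Hodge-genericity is fine.
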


\begin{proof}

This follows from Theorem 1.1 of \cite{Zywina} in exactly the same way as Theorem 1.2 of \cite{Zywina} does, by applying Theorem 1.1 to the restriction of scalars after passing to a Galois cover. 
    
\end{proof}

The purpose of this sections is to prove a large Galois orbits result:

\begin{thm}\label{thm:largegalois}
    Let $A$ be a ppav of dimension $g$ over number field $K$ of degree $O_g(1)$ over $\Q$ such that $[\GSp_{2g}(\hat\Z):\im\rho_A]=O_g(1)$. Fix $0\leq h< g$. Let $B$ be a $\Qbar$ ppav of dimension $g+h$ such that the lowest degree isogeny to $B$ from an abelian variety of the form $A\times C$, for a ppav $C$ of dimension $h$, is $N$. 
    
    Then the degree of the field of definition of $B$ is $\succ_g N$. Moreover, $B$ has a \textbf{polarized isogeny} from $A\times C'$ for a (possibly different) ppav $C'$ of degree $\prec_g N$.
\end{thm}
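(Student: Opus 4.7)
The plan is to first pass from the given isogeny $\phi\colon A\times C\to B$ to a polarized isogeny $\phi'\colon A\times C'\to B$ of degree $N'\prec_g N$ (establishing the second assertion of the theorem), then exploit the Lagrangian structure of $\ker\phi'$ to obtain a polynomial lower bound $|K_A'|\succ_g N^{(g-h)/(2(g+h))}$, and finally use the large Galois image of $A$ to transfer this bound to the field of definition $L$ of $B$.

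The polarized-isogeny step rests on a polarization decomposition. The large Galois image forces $\End(A)=\Z$, making $A$ simple; combined with $g=\dim A>h=\dim C$, this gives $\Hom(A,C)=\Hom(A,C^\vee)=0$, hence $\End^0(A\times C)\cong \Q\oplus \End^0(C)$. Consequently the pull-back polarization $\phi^*\lambda_B$ decomposes block-diagonally as $\mu_A\oplus \mu_C$ with $\mu_A=c_A\lambda_A$ a scalar multiple of the principal polarization on $A$. A standard construction (using the complementary sub-abelian varieties $B_A,B_C\subset B$ from Poincar\'e reducibility and replacing each by a polarized-isogenous ppav) produces a ppav $C'$ of dimension $h$ together with a polarized isogeny $\phi'\colon A\times C'\to B$ of degree polynomially bounded in $N$.

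Write $K':=\ker\phi'$; it sits inside $(A\times C')[m]$ for $m:=(N')^{1/(g+h)}$ and is a Lagrangian subgroup for the Weil pairing coming from $\lambda_{A\times C'}$, as is standard for polarized isogenies of ppavs. Let $K_A':=K'\cap(A\times\{0\})$ and let $M:=K'/(K_A'\oplus K_{C'}')$ denote the ``mixing'' subquotient. The Lagrangian condition forces $\pi_A(K')\subset(K_A')^{\perp}\subset A[m]$, yielding the identity $|K_A'|^2\cdot|M|=m^{2g}$. Combined with the trivial bound $|M|\leq N'$, this gives $|K_A'|\geq m^g/\sqrt{N'}\succ_g N^{(g-h)/(2(g+h))}$, a positive polynomial power of $N$ precisely because $g>h$.

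Finally, by Poincar\'e reducibility and $g+h<2g$, the abelian variety $B$ contains a unique $g$-dimensional abelian subvariety $B_A$ isogenous to $A$, and this $B_A$ is defined over $L$. Since $\End^0(A)=\Q$, the minimal isogeny $A\to B_A$ is unique up to $\pm 1$, so its kernel $K^{\min}$ is $\Gal(\Qbar/L\cdot K)$-stable. By minimality, $K^{\min}$ contains no $A[c]$ for $c>1$, and identification with $K_A'$ gives $|K^{\min}|\succ_g N^{(g-h)/(2(g+h))}$. The Galois-generic hypothesis then makes the $G_K$-orbit of $K^{\min}$ comparable to its $\GSp_{2g}(\Z/m)$-orbit, which for a primitive subgroup is polynomially large in $|K^{\min}|$; hence $[L:\Q]\succ_g N$. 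The main obstacle is the bookkeeping in the polarized-isogeny construction---especially accounting for the scalar $c_A$---along with the orbit-size estimate for primitive subgroups of $A[m]$ under $\GSp_{2g}(\Z/m)$, which requires using the minimality of $\phi'$ to rule out subgroups of the form $A[c]$ that would have a small orbit.
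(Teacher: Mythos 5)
Your overall strategy is close to the paper's (pass to a polarized isogeny, exploit the Lagrangian structure of the kernel, then use Galois‑genericity), and the Lagrangian computation
\[
|K_A'|^2\cdot|M|=m^{2g},\qquad |K_A'|\cdot|\pi_A(K')|=m^{2g}
\]
is correct and in the spirit of the paper's Step~3. However, there is a genuine gap in the last step, and it is exactly the point the paper's Step~1 is designed to handle.

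\textbf{The gap: $K_A'$ is not $K^{\min}$.} You set $K_A'=\ker(\phi'|_A)$ for the polarized isogeny $\phi'\colon A\times C'\to B$, prove $|K_A'|\succ_g N$, and then write ``identification with $K_A'$ gives $|K^{\min}|\succ_g N^{(g-h)/(2(g+h))}$,'' where $K^{\min}$ is the kernel of the minimal isogeny $\iota\colon A\to B_A$. But these two kernels need not coincide. Since $\End(A)=\Z$, every isogeny $A\to B_A$ is $n\iota$ for some $n\geq 1$, so $K_A'=[n]^{-1}(K^{\min})$ and $|K_A'|=n^{2g}\,|K^{\min}|$. If $n$ is large, $K^{\min}$ can be small (even trivial) while $K_A'$ is huge; the Galois orbit of $K_A'=[n]^{-1}K^{\min}$ is then no bigger than that of $K^{\min}$, so your lower bound on $|K_A'|$ does not transfer to the field of definition. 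Trying to control $n$ via minimality of $N$ gives only $n^{2g}\leq N'/N$, and since you have only the crude bound $N'\prec_g N$ (with an unspecified polynomial exponent), this does not rule out $n$ growing as a power of $N$; plugging in $N'\leq N^{(g+h)/g}$ gives an exponent that is \emph{negative} already for $g=2, h=1$. In short, the bound on $|K_A'|$ from the Lagrangian argument is attached to the polarized isogeny, which is the wrong isogeny for Galois descent.

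\textbf{How the paper avoids this.} The paper works with the kernel $G$ of the \emph{minimal} isogeny $\phi\colon A\times C\to B$, for which $G\cap A$ is automatically $K^{\min}$ (if $A[\ell]\subset G\cap A$, one factors through $(A/A[\ell])\times C\cong A\times C$ contradicting minimality of $N$). But then $\ker\phi$ is only a Lagrangian with respect to $\phi^*\lambda_B=c_A\lambda_A\oplus\mu_C$, not with respect to $\lambda_A\oplus\lambda_C$, so one cannot directly run your orthogonality computation. The whole content of the paper's Step~1 is to show that the ``$A$--scalar'' $m=c_A$ of this endomorphism satisfies $m\succ_g N$ (equivalently, $\deg\mu_C\prec_g N$), via an explicit replacement $C\rightsquigarrow C''$ and an appeal to minimality of $N$. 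Once $m\succ_g N$, the duality argument shows $|G\cap A[m]|\geq m^{g-h}$, and now this \emph{is} a lower bound on $|K^{\min}|$. So the key missing ingredient in your write‑up is precisely the minimality argument that controls the scalar $m$; the Lagrangian structure alone does not give it.

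A secondary, smaller issue: the sentence ``a standard construction...produces a ppav $C'$...of degree polynomially bounded in $N$'' conceals a genuine compatibility constraint. Writing $\psi^*\lambda_B=\lambda_{B_A}\oplus\lambda_{B_C}$, you need to find $C'\to B_C$ and $A\to B_A$ with pullback polarizations that are the \emph{same} integer multiple $q$ of the respective principal polarizations; the possible $q$'s on the $A$‑side form the set $\{n^2c'\}$ and on the $C$‑side are constrained by $q^h=(\deg\alpha)\sqrt{\deg\lambda_{B_C}}$, and there is no a priori reason a small common $q$ exists. The paper's Step~2 carries out exactly this matching by modifying the lattice $L_C$ (scaling and passing to an isotropic refinement) and relies on the bound from Step~1 to keep the degree under control. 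You correctly flag this as ``the main obstacle,'' but it needs to be resolved, not assumed.
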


\begin{proof}

    Note that since $g>h$ and $A$ is simple that $\Hom(A,C)=\Hom(C,A)=0$, and thus $\End(A\times C)\cong \Z\oplus \End(C)$. We thus record elements of $\End(A\times C)$ as pairs $(m,r)$ for $r\in\End(C)$. 
    
    We let $L_B\subset V_A\times V_C$ be the Lattice corresponding to $H_1(B,\Z)$ under the isogeny to $B$. We identify $V_B$ with $V_A\times V_C$. Note that $[L_B:L_A\times L_C]=N$. Now there is a natural form symplectic form $Q$ on $V_A\times V_C$ corresponding to the principal polarizations $\phi_A,\phi_C$  on $A,C$,  and this gives us an isomorphism
    $B^\vee(\bC)\cong V_{A,C}/L_B^*$ where $L_B^*$ denotes the dual of $L_B$ under $Q$. Corresponding to the principal polarization $\phi_B$ of $B$, there is therefore an endomorphism $f=(m,r)\in End(A\times C)$ such that $fL_B=L_B^*$, and by \ref{subsection:polariztions} we have that $f$ respects $Q$.

    \textbf{\textrm{Step 1: Proving} $m\succ_g N$ }
    
    \medskip
    Note first that $m^{g}(\deg r)^{\frac 12}= N.$ We have the lattices $$\pi_A(L_B)\supset L_B\cap V_A\supset L_A, \pi_C(L_B)\supset L_B\cap V_C\supset L_C$$ and by duality of $L_B$ and $fL_B$, we have the relations
    $$m\pi_A(L_B) = (V_A\cap L_B)^*, r(V_C\cap L_B)=\pi_C(L_B)^*,  r\pi_C(L_B)=(V_C\cap L_B)^*$$.

    Note that $\pi_C(L_B)/(V_C\cap L_B)\cong \pi_A(L_B)/(V_A\cap L_B)$ and thus is of size $d\leq m^{2\dim A}$.     Dualizing we get
    $$d= [(V_C\cap L_B)^*: \pi_C(L_B)^*] =[(V_C\cap L_B)^*: r(V_C\cap L_B)] $$

    Let $C':=V_C/(V_C\cap L_B)$. Then $C'^\vee \cong V_C/\pi_C(V_C\cap L_B)^*$ and $r:C'\ra C'^\vee$ is a polarization on $C'$ of degree $d$. As a result we may find an polarized isogeny $C''\ra C'$ of degree $\leq d$ where $C''$ is principally polarized. Then the natural isogeny $ A\times C''\ra B$ is of degree at most $dm^{2g}\leq m^{4g}$. By virtue of the minimality of $N$, we conclude that $\deg r\leq m^{6g}$ and thus that $m\geq N^{\frac {1}{4g}}$. 

    \textbf{\textrm{Step 2:} Getting a polarized isogeny }
    
    \medskip

    Replacing $C$ by the $C'$ above (at the cost of replacing $N$ by $N'\prec_g N$) we may assume that $r$ is multiplication by some positive integer $m'$.

    We are now in the setup where $m\prec_g N$ and $r$ is multiplication by a positive integer $m'$. We now observe that we may replace $(C,\phi_C)$ by $(C',\phi_C\cdot\frac1k)$ by changing $L_C$ to $L'_C$
    where $L'_C$ mod $k$ is maximally isotropic for the weil-pairing. This has the effect of changing $m'\ra m'k$. We may also replace $L_C$ by $\frac{1}{a}L_C$ and $\phi_C$ by $k^2\phi_C$ which sends $m'\ra m'/a^2$. Thus by changing $C$ and the polarization in this way we may reduce to $m'=m$ by first scaling by $k=mm'$ and then dividing by $a=m'$. This gives a polarized isogeny $A\times C\ra B$ of size $\prec_g N$. 

    \textbf{\textrm{Step 3:} Large Galois Orbit }
    
    \medskip

    We finally prove that the $B$ is defined over a large field of definition. To prove this, we note that $B$ contains a unique abelian subvariety $D$ which is isogenous to $A$. It suffices to show therefore that $D$ has a large field of definition. We now consider again the minimal isogeny $f=(m,r):A\times C\ra B $, and write let $G:=L_B/(L_A\times L_C)$. Note that there is a sequence
    $$f^{-1}(L_A\times L_C)\supset L_B\supset L_A\times L_C$$ whose successive quotients are $G$ and $G^{\vee}$, and note that the latter is isomorphic to $G$ as an abelian group. Moreover, the total quotient is $A[m]\times C[r]$. It follows that $\#G[m]^2\geq \#A[m]\cdot\#C[(r,m)]$. 
    But now $G^\vee[m]\subset A[m]\times C[(r,m)]$  and therefore
    $$\#(G^\vee\cap A[m])\geq \big(\frac{\#A[m]}{\#C[(r,m)]}\big)^{\frac12}\geq m$$ where the last inequality follows from $g\geq h+1$. It follows that $G^\vee\cap A$ contains an element of order $e\geq m^{\frac1{2g}}\succ_g N$. 
    
    Now, by minimality of $N$ we see that $G^\vee\cap A[m]$ does not contain any $A[\ell]$ for a prime $\ell$. Thus we see that $\#(G^\vee\cap A[e])\leq e^{2g-1}$. On the other hand, the orbit under $\GSp_{2g}(\hat\Z)$ of $G^\vee\cap A[e]$ contains all of $A[e]$ which is of size $e^{2g}$. It follows that there are at least $e$ such orbits under $\GSp_{2g}(\hat\Z)$. It follows by the assumption on $\rho_A$ that $G^\vee\cap A[e]$ has $\succ_g N$ orbits under the action of Galois.
    The claim now follows from the isomorphism $D\cong A/(G^\vee\cap A)$ together with the simplicity of $A$.

\end{proof}

\section{Geometric Arguments: Weakly Special Subvarieties}

We shall require the following:

\begin{prop}\label{prop:specialssplit}
    Let $B$ be a ppav of dimension $g+h$ which has a quotient $A$ which is Galois generic of dimension $g$. Let $S$ be a proper special subvariety of $\cA_g$ which contains $[B]$. Then the universal abelian variety over $S$ is not geometrically simple.
\end{prop}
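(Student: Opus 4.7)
The plan is to show that $V_B := H_1(B, \mathbb{Q})$ admits a proper nonzero subspace invariant under the derived group $H^{\mathrm{der}}$ of the reductive group $H$ underlying the Shimura subdatum of $S$; this produces a sub-variation of Hodge structure on $S$ and hence a nontrivial isogeny sub-factor of the generic abelian variety. The case $h = 0$ is vacuous since a Hodge-generic $A$ lies on no proper special subvariety of $\mathcal{A}_g$, so we assume $h \geq 1$ (whence $g \geq 2$).

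First I would write $B$ isogenous to $A \times C$ with $\dim C = h$. Simplicity of $A$ and $\dim A = g > h$ give $\Hom(A, C) = \Hom(C, A) = 0$, so $\End^0(B) = \mathbb{Q} \oplus \End^0(C)$ and $V_B = V_A \oplus V_C$ canonically. Galois-genericity of $A$ together with Proposition~\ref{prop:abshodge} gives $\mathrm{MT}(A) = \GSp_{2g}$, whence $M := \mathrm{MT}(B)$ embeds in $\GSp_{2g} \times \mathrm{MT}(C)$ and surjects on each factor. By Goursat, the kernel of the first projection corresponds to a normal subgroup $N_1 \trianglelefteq \GSp_{2g}$; since neither $\Sp_{2g}$ nor $\mathrm{PSp}_{2g}$ admits a faithful representation of dimension $\leq 2h$ when $g \geq 2$ and $h < g$, the isomorphism $\GSp_{2g}/N_1 \cong \mathrm{MT}(C)/N_2 \hookrightarrow \GL_{2h}$ can hold only if $N_1 \supseteq \Sp_{2g}$. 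Hence $M^{\mathrm{der}}$ contains $L := \Sp_{2g} \times \{1\}$, acting as the standard representation on $V_A$ and trivially on $V_C$.

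Next, I would decompose $H^{\mathrm{der}}$ as an almost direct product of simple factors $L_1 \cdots L_r$. For any factor $L_j$ whose projection from $L$ is trivial, $L_j$ centralizes $L$; the centralizer of $L$ in $\GSp(V_B)$ is $\{(\lambda \cdot \mathrm{id}_{V_A}, g) : g \in \GSp(V_C),\, \mathrm{sim}(g) = \lambda^2\}$, and simplicity of $L_j$ trivializes the $\lambda$-character, so $L_j$ acts trivially on $V_A$; the condition $L_j \subset \Sp(V_B)$ combined with $V_A^{\perp} = V_C$ then forces $L_j$ to preserve $V_C$ as well. The critical step is the unique simple factor $L_1$ to which $L$ projects nontrivially: if $L_1 = L$, its isotypic decomposition of $V_B$ is already $V_A \oplus V_C$ and we are done; otherwise $L_1 \supsetneq L$ is strictly larger, and structural analysis of simple symplectic subgroups of $\Sp(V_B) = \Sp_{2(g+h)}$ containing $\Sp_{2g}$ with branching $V_B|_{\Sp_{2g}} = \mathrm{std} \oplus \mathrm{triv}^{2h}$ --- using $L_1 \subsetneq \Sp(V_B)$ (forced by $S$ being proper, so $H \subsetneq \GSp(V_B)$), $h < g$, and Dynkin-style classification --- shows that the $L_1$-representation on $V_B$ necessarily splits as $V_A' \oplus V_C'$ for a proper $L_1$-invariant subspace $V_A \subseteq V_A' \subsetneq V_B$ (for instance, for $L_1 = \Sp_{2g'}$ with $g < g' < g+h$, one takes $V_A'$ to be the $L_1$-standard summand).

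Combining, $V_A'$ is invariant under every simple factor of $H^{\mathrm{der}}$, hence under $H^{\mathrm{der}}$; since $H^0 = H^{\mathrm{der}} \cdot Z(H)^\circ$ with $Z(H)^\circ$ commuting with $H^{\mathrm{der}}$, $V_A'$ is also $H^0$-invariant. At every point of $S$ the Hodge cocharacter lies in $H^0$ and preserves $V_A'$, so $V_A'$ underlies a sub-VHS of the universal family over $S$, yielding a proper isogeny sub-factor of the universal abelian variety; in particular it is not geometrically simple. The main obstacle is the structural step for $L_1 \supsetneq L$ --- ruling out simple $L_1$ acting irreducibly on $V_B$ with the prescribed $\Sp_{2g}$-branching --- which is precisely where the hypotheses $h < g$ and the properness of $S$ are indispensable.
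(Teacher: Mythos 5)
Your first half matches the paper's proof in substance: you establish that $\mathrm{MT}(B)$, and hence the defining group $H$ of $S$, contains $\Sp(V_A)\times\{1\}$, via Goursat's lemma and the dimension obstruction $h<g$ (the paper packages the same argument slightly differently, running it $\ell$-adically and then invoking Proposition~\ref{prop:abshodge}, but the content is identical). Your concluding step, from an $H$-invariant proper subspace to geometric non-simplicity of the universal family, is also fine.

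The gap is exactly where you flag it: the ``structural analysis of simple symplectic subgroups of $\Sp_{2(g+h)}$ containing $\Sp_{2g}$ with branching $\mathrm{std}\oplus\mathrm{triv}^{2h}$.'' You assert that such an $L_1\subsetneq\Sp(V_B)$ must preserve a proper subspace $V_A\subseteq V_A'\subsetneq V_B$ and gesture at ``Dynkin-style classification,'' but this is the entire difficulty and it is not carried out. The paper does not do any case-by-case classification. Instead it proves Lemma~\ref{lem:grp}: a self-contained Lie-algebra computation showing directly that any connected reductive $G\subset\GSp(V\oplus W)$ containing $\Sp(V)$ either contains $\Sp(V\oplus W)$ or preserves $V\oplus W'$ for a proper symplectic $W'\subset W$. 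This lemma also handles two other points your sketch leaves untreated: (a) it does not require decomposing $H^{\mathrm{der}}$ into simple factors at all, which sidesteps the question of whether $L$ really projects nontrivially onto a unique factor (a priori $L$ could sit diagonally across isomorphic factors, and you would need a separate argument to exclude this); and (b) it establishes that the invariant subspace can be taken $\Q$-rational, which is needed to conclude that the universal abelian scheme actually splits up to isogeny — your $V_A'$, as produced from an absolutely simple factor of $H^{\mathrm{der}}$, need not be defined over $\Q$. So the overall plan is parallel, but the key lemma that makes the paper's proof go through is precisely the piece you leave as a black box.
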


\begin{proof}

Let $G\subset \GSp_{2(g+h),\Q}$ be the subgroup corresponding to $S$. Now the $\ell$-adic derived Mumford tate group $MT^{\textrm{der}}_{\ell}(B)$ surjects onto $MT^{\textrm{der}}_{\ell}(A)$ and thus onto $\Sp_{2g,\Q_\ell}$. Since 
$\Sp_{2g}$ is a simple algebraic group, it follows by Goursat's Lemma that $MT_{\ell}(A)$ contains $\Sp_{2g}$ and hence by Proposition \ref{prop:abshodge} that $MT(A)$ does as well, and hence so does $G$. By Lemma \ref{lem:grp} it follows that $G$ preserves a non-trivial $\Q$-symplectic splitting, and therefore the universal abelian variety over $S$ is not geometrically simple.

\end{proof}

The following Lemma is almost certainly not original but we could not find it in the literature:

\begin{lemma}\label{lem:grp}

Let $V,W$ be symplectic spaces over $\bC$ and let $G\subset\GSp(V\oplus W)$ be a connected reductive group which contains $\Sp(V)$. Then either $G$ contains $\Sp(V\oplus W)$ or else there is a proper symplectic subspace $W'\subset W$ such that $G$ preserves $V\oplus W'$. Moreover, if $V,W,G$ are defined over a field $F\subset\bC$ then $W'$ can be taken to be as well.

\end{lemma}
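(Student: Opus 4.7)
The plan is to exploit the simplicity of $\Sp(V)$ together with the reductivity of $G$ to pin down the structure of $G$-invariant subspaces containing $V$. As an $\Sp(V)$-module, $V \oplus W$ decomposes into its standard-isotypic component $V$ (with multiplicity one) and its trivial-isotypic component $W$. Let $M \subseteq V\oplus W$ be the smallest $G$-invariant subspace containing $V$. Since $\Sp(V)\subseteq G$, this $M$ is also $\Sp(V)$-invariant, and containing $V$ forces $M = V\oplus W'$ for a unique subspace $W'\subseteq W$.

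The first key step is to show that $M$ is actually $G$-irreducible. By reductivity of $G$, decompose $V \oplus W = \bigoplus_i M_i$ into $G$-irreducibles. Restricted to $\Sp(V)$, each $M_i$ contains some number of copies of the standard representation $V$, and these multiplicities sum to one; so exactly one summand, call it $M_j$, contains a copy of $V$. The projection $V \to M_i$ vanishes for $i \neq j$ by Schur (since those $M_i$'s are $\Sp(V)$-trivial), so $V \subseteq M_j$; by minimality $M \subseteq M_j$, and irreducibility forces $M = M_j$.

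The second key step is to show that $W'$ is symplectic using that $G$ preserves $\omega$. The restriction $\omega|_M$ is a $G$-invariant bilinear form on the $G$-irreducible module $M$. Since $V\subseteq M$ and $\omega|_V\neq 0$, this restriction is nonzero, so by Schur's lemma applied to the induced $G$-map $M \to M^*$ it must be non-degenerate. Since $M = V\oplus W'$ is an $\omega$-orthogonal direct sum (as $V \perp W$ in $V\oplus W$), the non-degeneracy of $\omega|_M$ forces $\omega|_{W'}$ to be non-degenerate, i.e., $W'$ is a symplectic subspace. If $W'\subsetneq W$, we are in the second case of the lemma.

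The remaining case, which I expect to be the main technical step, is when $W' = W$, i.e., $V\oplus W$ itself is $G$-irreducible; here we must show $G \supseteq \Sp(V\oplus W)$. I would pivot to the Lie algebra: under the identification $\mathfrak{sp}(V\oplus W) \cong \Sym^2(V\oplus W) = \Sym^2 V \oplus (V\otimes W) \oplus \Sym^2 W$, the three summands are the adjoint, standard, and trivial $\Sp(V)$-isotypic components, so the $\Sp(V)$-submodule $\mathfrak{g}$ must decompose along them. By hypothesis $\mathfrak{g}$ contains $\Sym^2 V = \mathfrak{sp}(V)$, and the same analysis as above applied to the adjoint representation shows its standard-isotypic part equals all of $V\otimes W$. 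A direct bracket computation $[v_1\cdot w_1, v_2\cdot w_2] = \omega(v_1,v_2)\, w_1\cdot w_2 + \omega(w_1,w_2)\, v_1\cdot v_2$ with $\omega(v_1,v_2)\neq 0$, combined with $\mathfrak{sp}(V)\subseteq \mathfrak{g}$, then produces all of $\Sym^2 W$ inside $\mathfrak{g}$. Hence $\mathfrak{g} \supseteq \mathfrak{sp}(V\oplus W)$, and by connectedness $G \supseteq \Sp(V\oplus W)$. Rationality is automatic: $M$ is intrinsically the smallest $G$-submodule containing $V$, so it is defined over $F$ whenever $V$, $W$, $G$ are, and then $W' = M\cap W$ descends to $F$ as well.
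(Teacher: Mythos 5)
Your approach is structurally different from the paper's and in places cleaner, but there is one genuine gap at the crucial step.

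\textbf{What you do differently.} The paper first shows it suffices to find \emph{any} proper $G$-invariant subspace containing $V$ (using reductivity and the symplectic form to extract a symplectic $W'$), reduces to $G$ semisimple, and then works entirely at the Lie-algebra level with a case split on whether $R := \mathfrak{g}\cap(V\otimes W)$ equals $\Hom(V,K)$ for $K\subsetneq W$ or $K=W$. You instead immediately isolate $M$, the smallest $G$-invariant subspace containing $V$, prove $M$ is $G$-irreducible using multiplicity-one of the standard $\Sp(V)$-representation, and get nondegeneracy of $\omega|_M$ (hence symplecticity of $W'$) from Schur's lemma. This is a nice simplification: the paper's two preliminary reductions (radical of $K$; passing to $G^{\der}$) are replaced by one Schur argument. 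Your rationality argument ($M$ is characterized intrinsically, hence $\Gal$-stable) is also tidier than the paper's. The final bracket computation in $\Sym^2(V\oplus W)$ agrees with the paper's elementary-unipotent computation, modulo normalization.

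\textbf{The gap.} In the case $W'=W$, you assert that ``the same analysis as above applied to the adjoint representation shows its standard-isotypic part equals all of $V\otimes W$.'' This does not follow from the analysis above. The first analysis relied on the standard representation $V$ having multiplicity one inside $V\oplus W$, which pins down where $V$ sits in a $G$-irreducible decomposition; but in $\mathfrak{sp}(V\oplus W)$ the standard-isotypic component $V\otimes W$ has multiplicity $\dim W$, so that argument gives nothing. More to the point, the claim is not a consequence of $V\oplus W$ being $G$-irreducible plus $\mathfrak{g}$ being an $\Sp(V)$-submodule: one genuinely needs that $\mathfrak{g}$ is closed under bracket. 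Concretely, write the $\Sp(V)$-decomposition $\mathfrak{g}=\mathfrak{sp}(V)\oplus R\oplus S$ with $R\subseteq V\otimes W$, $S\subseteq\mathfrak{sp}(W)\oplus\bC$. As an $\Sp(V)$-submodule, $R=V\otimes K$ for some $K\subseteq W$. If $K\subsetneq W$, then $[S,R]\subseteq R$ forces $S\cdot K\subseteq K$ (the adjoint action of $\mathfrak{sp}(W)$ on $V\otimes W$ is through its action on the $W$ factor), and then one checks directly that all three pieces of $\mathfrak{g}$ preserve $V\oplus K$ — contradicting $G$-irreducibility of $V\oplus W$. This is exactly the observation the paper makes (``the lie action of $S$ on $R$ is simply right composition, and therefore $S$ must preserve $K$''). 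Once you insert this bracket argument, the rest of your proof — including the computation $[v_1 w_1, v_2 w_2]=\omega(v_1,v_2)w_1 w_2+\omega(w_1,w_2)v_1 v_2$ and the conclusion that $\Sym^2 W\subseteq\mathfrak{g}$ — is correct.
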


\begin{proof}

We first claim that its is enough to conclude that $G$ preserves any proper subspace containing $V$. To see this, suppose $G$ preserves $K\oplus V$ for $K\subsetneq W$. Now let $K'\subset K$ be the subspace which pairs to $0$ with all of $K$. Then as $G$ is symplectic, it preserves $K'$, and since $G$ is reductive there is a complement $W'\subset K$ such that $K'\oplus W'=K$ and $G$ preserves $W'\oplus V$. As $W'$ is a complement to $K'$ it must be symplectic, and the claim is proven.

Next we claim that if  $G^{\textrm{der}}$ preserves a proper subspace containing $V$, then so does $G$. Indeed, $G=Z(G)\cdot G^{\textrm{der}}$. Since $Z(G)$ commutes with $\Sp(V)$ it must preserve $W$, and hence also $V$ as its symplectic. Now let $K\subsetneq W$ be minimal such that $V\oplus K$ is preserved by $G^{\textrm{der}}$. Then for any $z\in Z(G)$ the subspace $V\oplus zK$ is also preserved by $G^{\textrm{der}}$ and hence so is $V\oplus (K\cap zK)$. The claim follows by the minimality assumption. Therefore, we may replace $G$ by its derived subgroup and assume its semisimple and contained in $\Sp(V\oplus W)$.

We now consider the Lie algebra $L:=\textrm{Lie}(\GSp(V\oplus W))$ as a $\Sp(V)$-module via the adjoint action. We may write elements of $L$ in matrix form as $\big(\begin{smallmatrix} A & B\\C & D\end{smallmatrix}\big) $ where $A\in \fsp(V), B\in \Hom(W,V), C\in\Hom(V,W), D\in \fsp(W)$ satisfy the condition
$B=-C^\vee$ where we use the symplectic pairing to identify $V,W$ with $V^\vee,W^\vee$ respectively. 

We may therefore write $L=\big(\fsp(V)\oplus \Hom(W,V)\big)\oplus\fsp(W)$ where the two summands constitute a decomposition of $L$ into isotypic $\Sp(V)$-components, with the first summand isomorphic to a power of $V$ and the second summand a power of the trivial representation. Now consider $M:=Lie(G)\subset L$ as a $\Sp(V)$ representation, and write it as 
$$M=\fsp(V) \oplus R\oplus S$$ where $R\subset \Hom(V,W), S\subset\fsp(W)$. 
Since $L$ is a lie algebra it follows that $S$ is also a lie algebra. 
Note that as $R$ is preserved by $\Sp(V)$ it follows that $R=\Hom(V,K)$ for some $K\subset W$. 

Suppose first that $K\subsetneq W$. Note that the lie action of $S$ on $R$ is simply right composition, and therefore $S$ must preserve $K$. It follows that elements of $M$ preserve $K\oplus V$, and thus that $G$ preserves $K\oplus V$, as desired.

On the other hand, suppose that $K=W$. Then $M$ contains all of $\Hom(W,V)$. We prove now that $M$ must be all of $\fsp(V)$. Indeed, for any elements $w_1\in W, v_1\in V$ we have the element $w\ra\langle w,w_1\rangle v_1$ of $\Hom(W,V)$, which corresponds to the element $v\ra \langle v,v_1\rangle w_1$ of $\Hom(V,W)$. Applying the lie bracket for two such elements and projecting to $S$ gives us the element
$$w\ra \langle w,w_1\rangle \langle v_1,v_2\rangle w_2-\langle w,w_2\rangle \langle v_2,v_1\rangle w_1$$ of $S\cap M$. Picking $w_1=w_2, \langle v_1,v_2\rangle =\frac12$ gives us the element
$$w\ra \langle w,w_1\rangle w_1.$$ It is well known that these elementary unipotents generate all of $S$, which shows $M\supset S$ and completes the proof. 

For the final part of the claim, suppose that $V,W,G$ are defined over $F$. Now consider all the subspaces $K\subset W$ such that $V\oplus K$ is preserved by $G$. Then this set is closed under intersections and $\Aut(\bC/F)$ and therefore the minimal such $K_0$ is defined over $F$. Now let $K_1\subset K_0$ be the subspace that pairs to 0 with all of $K_0$. Then $K_1$ is also preserved by $G$, and hence there is a complementary $K_2\oplus K_1=K_0$ such that $V\oplus K_2$ is preserved by $G$. By the minimality it must be the case that $K_2=K_0$ and thus $K_0$ is symplectic. Since $K_0$ is defined over $F$, the claim is proven.

\end{proof}

\section{Proof of Theorem \ref{thm:main}}

The purpose of this section is to prove Theorem \ref{thm:main}. We shall in fact find it convenient to prove a slightly stronger theorem. To state it, we fix a counting function $\phi$ on $\cA_g$ as in section \S\ref{subsection:counting}.

\begin{thm}\label{thm:mainsstrong}
    Let $h<g$ be positive integers. Suppose that $V\subset \cA_{g+h}$ is a subvariety such that over $\bC$, the generic $g$-dimensional abelian variety is not a quotient of an abelian variety parameterized by $V$. Then a $\phi$-generic abelian variety $A$ is not a quotient of an abelian variety parameterized by $V$.
    
\end{thm}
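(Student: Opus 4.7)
The plan is to prove Theorem \ref{thm:mainsstrong} by a Pila-Zannier argument combined with a double induction on $(h,\dim V)$ ordered lexicographically, with the case $h=0$ reducing to the situation of \cite{MZjacobians, Zywina}. By Proposition \ref{prop:openimage} I restrict to the $\phi$-generic locus of $A\in\cA_g$ with $[\GSp_{2g}(\hat\Z):\im\rho_A]=O_g(1)$ and $[K(A):\Q]=O_g(1)$. Assuming for contradiction that a non-negligible $\phi$-proportion of such $A$ are quotients of points $B_A\in V(\Qbar)$, let $N_A$ be the minimal degree of an isogeny $A\times C\to B_A$ with $C$ an $h$-dimensional ppav. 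Theorem \ref{thm:largegalois} upgrades this to a polarized isogeny $A\times C_A'\to B_A$ of degree $\prec_g N_A$ and provides the key Galois-orbit bound $[K(B_A):\Q]\succ_g N_A$.

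The first step is to reduce to the case where $V$ is not contained in any proper weakly special subvariety of $\cA_{g+h}$. If $V\subset S$ with $S\subsetneq\cA_{g+h}$ weakly special, Proposition \ref{prop:specialssplit} -- applicable because our $A$ is Galois generic and a quotient of $B_A\in S$ -- implies the universal abelian variety over $S$ splits geometrically as $\cX_1\times\cX_2$. Since $A$ is simple, it is a quotient of exactly one of the factors, forcing $\dim\cX_1\geq g$ and $\dim\cX_2\leq h<g$. Projecting $V$ to the moduli of $\cX_1$ yields a subvariety $V_1\subset\cA_{g+h'}$ with $h'<h$ to which the hypothesis transfers, and induction on $h$ concludes this case.

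Now assume $V$ is not in any proper weakly special. Theorem \ref{thm:GZP} supplies a proper subvariety $Z\subsetneq V$ absorbing every positive-dimensional weakly-atypical subvariety. Fixing $\tau_A\in\cF_g$ lifting $A$ and letting $\tilde V=\pi_{g+h}^{-1}(V)$, consider the family of $\R_{an,\exp}$-definable sets
\[ D_{\tau_A} = \bigl\{(\tau,M)\in(\tilde V\cap\cF_{g+h})\times\GSp_{2(g+h)}(\R):M\tau=\diag(\tau_A,\tau_C)\text{ for some }\tau_C\in\cF_h\bigr\}. \]
By Corollary \ref{cor: isogenyheightbound}, each $K(A)$-Galois conjugate $B_A^\sigma$ contributes a rational point of $D_{\tau_A}$ of height $\prec_g N_A$, and Theorem \ref{thm:largegalois} furnishes $\succ_g N_A$ such conjugates. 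The uniform Pila-Wilkie bound (Theorem \ref{pointcount2}) forces all but $N_A^{o(1)}$ of these to lie in $D_{\tau_A}^{\alg}$, so some positive-dimensional semi-algebraic component $Y\subset D_{\tau_A}^{\alg}$ contains at least one of them. Its projection to $\tilde V$ has Zariski closure a positive-dimensional subvariety $V'\subset V$, each point of which is isogenous, via matrices varying in a single algebraic family, to a point in the product locus $\{[A]\}\times\cA_h$. Ax-Schanuel (Theorem \ref{thm:AS}) applied to the graph of this isogeny relation then promotes this algebraic coincidence to the statement that $V'$ is weakly atypical in $V$. Theorem \ref{thm:GZP} then gives $V'\subset Z$, every problematic $B_A$ lands in $Z$, and induction on $\dim V$ (the hypothesis persisting since $Z\subset V$) completes the proof.

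The main obstacle will be the weak-atypicality step: the Pila-Wilkie component $Y$ must produce a $V'$ whose dimension strictly exceeds the expected intersection dimension of $V$ with a Hecke translate of the product locus $\{[A]\}\times\cA_h$. The plan for this is to exploit the fact that the algebraic family of isogeny matrices associated to $Y$ forces the image of $Y$ in $\cH_{g+h}\times\cH_{g+h}$ (under $\tau\mapsto(\tau,M\tau)$) to have dimension strictly larger than one predicts from generic intersection with the graph of $\pi_{g+h}$ -- precisely the configuration Theorem \ref{thm:AS} rules out unless things land in a proper weakly special, which, having already disposed of that case in Part I, translates into the desired weak atypicality of $V'$ inside $V$.
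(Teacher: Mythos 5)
Your proposal follows the same broad Pila--Zannier skeleton as the paper: reduce out of weakly specials via Proposition \ref{prop:specialssplit} and Lemma \ref{lem:grp}, use Theorem \ref{thm:largegalois} to get a large Galois orbit of bounded height, apply Pila--Wilkie, extract unlikely intersections via Ax--Schanuel, invoke Geometric Zilber--Pink, and close with an induction on the pair $(h,\dim V)$. But there are two genuine gaps in the middle and end of the argument.

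First, the step from ``some positive-dimensional semi-algebraic component $Y\subset D_{\tau_A}^{\alg}$'' to ``its projection to $\tilde V$ has positive-dimensional Zariski closure'' does not follow. The fiber of $D_{\tau_A}$ over a fixed $\tau\in\tilde V\cap\cF_{g+h}$ is already positive-dimensional: any $M$ in the preimage of the positive-dimensional set $\{\tau_A\}\times\cF_h$ under the orbit map $M\mapsto M\tau$ lies in it, and the stabilizer of a point of $\cH_{g+h}$ in $\GSp_{2(g+h)}(\R)$ is itself positive-dimensional. So the Pila--Wilkie curve $Y$ could well be \emph{vertical}, lying entirely over a single $\tau$, and then $V'$ is a point. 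The paper handles precisely this by a case split: either (i) the Galois conjugates of $B$ produce many distinct isogeny matrices $M_{B'}$ (then Pila--Wilkie applied to $I_{[A]}\subset\GSp$ gives a semi-algebraic curve of matrices, and one further argues that a definable finite-fibered family over that curve must sweep out a positive-dimensional piece of $\cF_V$), or (ii) the $M_{B'}$ are few, in which case a single translate $M\cdot([A]\times\cF_h)\cap\cF_V$ already contains $\succ_g N_A$ conjugates and so has a positive-dimensional component. You also need the dimension bound $\dim V+\dim\cA_h<\dim\cA_{g+h}$ (the paper's Lemma \ref{lem:Vdimension}, itself derived from the hypothesis that the generic $g$-fold over $\bC$ is not a quotient of anything in $V$) in order for the resulting component to actually be weakly atypical; your proposal does not verify this, and without it Ax--Schanuel gives nothing.

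Second, the conclusion ``induction on $\dim V$ completes the proof'' is too fast. What the GZP step plus induction actually gives you is: for $\phi$-generic $A$ which is a quotient of something in $V$, the invariant $N_A$ is bounded, $N_A = O_g(1)$. You still have to show that the set of $A\in\cA_g$ admitting a degree-$\leq N_0$ polarized isogeny $A\times C\to B$ with $[B]\in V$ has $\phi$-density zero. The paper does this in its final paragraph by observing that, for fixed $N$, such $A$ lie in the projection to $\cA_g$ of $T_N\cdot V\cap(\cA_g\times\cA_h)$ (a Hecke translate intersected with the product locus), which is a proper subvariety of $\cA_g$ precisely by the hypothesis on $V$, and a finite union of proper subvarieties has $\phi$-density zero. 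Without this closing step the contradiction does not materialize.
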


\begin{proof}
    We assume the opposite by contradiction. Let $h\geq 0$ me minimal such that the statement is false, and pick a minimal counterexample $V\subset\cA_{g+h}$. 
    
    \begin{lemma}\label{lem:Vnotinpws}
    $V$ is not contained in a proper weakly special subvariety of $\cA_g$. 
    \end{lemma}
    \begin{proof}
        
    Indeed, if it is, then by Proposition \ref{prop:specialssplit} the abelian scheme over $V$ must be is not-simple, and thus $V$ admits a finite-to-one map to  $\cA_{g+h'}\times\cA_{h-h'}$ for some $0\leq h'<h$. In that case if we let $V_0$ be the projection of $V$ to $\cA_{g+h'}$ then the $\phi$-generic $A\in\cA_g$ is not a quotient of anything represented by $V_0$ by induction.  It follows that the $\phi$-generic $A$ is also not a quotient of anything in $V$ since such an $A$ is simple by Proposition \ref{prop:openimage}.
    \end{proof} 

    Now we set $\cA_g,\cF_g,\pi_g$ as in \S2, and we set $\cF_V:=\pi_g^{-1}(V)\cap\cF_g.$
        \begin{lemma}\label{lem:Vdimension}
     $V$ is such that $\dim V+\dim\cA_h< \dim\cA_{g+h}$
    \end{lemma}

    \begin{proof}
        Suppose not. Let $[A]\in\cA_g$ be an abelian variety 
        which is quotient of something parameterized by $V$. Then $g([A]\times \Hh_h)\cap \pi_g^{-1}(V)\neq 0$. By our dimension assumption, this is also true for a generic nearby $x\in \cA_g$. Picking $x$ to be a generic $\bC$-point of the $\Q$-variety $\cA_g$ contradicts our assumptions on $V$.
    \end{proof}

    Next, for $x\in\cF_g$ we set
    $$I_x:=\{g\in\GSp_{2g+2h}(\bC)\mid g(x\times \cF_h)\cap\cF_V \neq\emptyset\}.$$

    Note that $I\ra \cF_g$ is a definable family. Moreover, for each $g\in\GSp_{2g+2h}(\bC)$ the number of connected components of  $ g(x\times \cF_g)\cap\cF_V $ is $O_g(1)$. 

    Now, we let $[A]\in\phi^{-1}(U(\Q))$ be a $\phi$-generic point, defined over $K$, which is a quotient of an abelian variety parameterized by $V$. Then by Theorem \ref{thm:largegalois}, $[\GSp_{2g}(\hat\Z):\rho_A(G_k)]=O_g(1)$.

    Let $[B]\in V$ be such that $A$ is a quotient of $V$, let $N_A$ is the minimal degree polarized isogeny of $B$ to $A\times \cA_h$.

    \begin{lemma}\label{lem:somethingunlikely}
    For $N_A\gg 1$, There is a positive-dimensional weakly atypical subvariety $W\subset V$ which contains a conjugate of $[B]$ over $\Q(A)$. 
    \end{lemma}

    \begin{proof}
     By Theorem \ref{thm:largegalois} the field of definition of $B$ has degree $\succ_g N_A$ over $\Q$, and hence over $\Q(A)$. Write $\phi:B\ra A\times C$ for the isogeny, and by a slight abuse of notation we use $[A],[C],[B]$ to refer to the representatives in the fundamental domains $\cF_g,\cF_h,\cF_{g+h}$. By Corollary \ref{cor: isogenyheightbound} it follows that there exists an $M\in I_{[A]}(\Q)$ of height $H(M)\prec_g N_A$ such that $M([A]\times[C]) = [B]$. In fact, we obtain such an element $M_{B'}$ for each Galois conjugate $B'$ of $B$.

     Suppose the number of distinct $M_{B'}$ we obtain in this way is $N_A^{o(1)}$. Then for some such element $M$ we must have that  $M\cdot([A]\times \cF_h) \cap \cF_V$ contains $\succ_g N_A$ distinct conjugates of $B$. Since this is a definable family as $M$ varies, it follows that for $N_A\gg 1$ that it must consist of at least one positive dimensional component $U$ containing a conjugate of $B$ (in fact, many such conjugates). By Lemma \ref{lem:Vdimension} we see that $U$ is an unlikely intersection. By Theorem \ref{thm:AS} it follows that $U$ is contained in a weakly atypical subvariety of $V$. 

     Thus we assume that the number of distinct $M_{B'}$ is $\prec_g N_A$. By Theorem \ref{pointcount2}, we conclude that for $N_A\gg 1$,  $I_{[A]}$ contains a semi-algebraic curve $C_0$ containing some $M_{B'}$ for $B'$ a conjugate of $B$. Let $C$ be the complex algebraic curve containing $C_0$. We now have two cases.

    Suppose that for some conjugate $B'$ we have $M_{B'}\cdot([A]\times \cF_h) \cap \cF_V$ is positive dimensional, and let $U$ denote an analytic component of such an intersection. We then conclude as above.
    
    Alternatively, we see that for a generic $c\in C$ the variety $c\cdot ([A]\times \cF_h) \cap \cF_V$ is finite. Since this is a definable family of (0-dimensional )varieties as $c$ varies, it follows that thse intersections are of size $O_g(1)$. Hence the intersections must vary with $c\in C$ and thus 
    $C\cdot ([A]\times \cF_h) \cap \cF_V$ is positive dimensional. Let $U$ be an analytic component of the intersection. Again, by Lemma \ref{lem:Vdimension} we know that $U$ is an unlikely intersection, so we conclude this case in the same way.

    \end{proof}
    
    We now apply our Geometric Zilber-Pink result \ref{thm:GZP} to conclude that a conjugate of $B$ is contained in a proper subvariety $V_1\subset V$ which contains all the positive-dimensional weakly atypical subvarieties of $V$. It follows that for $\phi$-generic $A$ with $N_A\gg 1$, if $A$ is a quotient of something represented by $V$ then $A$ is a quotient of something represented by $V_1$. By our minimality assumption on $V$,  we conclude that for  a $\phi$-generic $A$ which is a quotient of something represented by $V$, the corresponding $N_A$ is bounded.

    Now, for a fixed $N$ the set of $A$ with $N_A=N$ lies in the projection of $T_N\cdot V\cap (\cA_g\times \cA_h)$ to $\cA_g$ for $T_N$ the Hecke-correspondence for polarized isogenies of degrees $N$. This must be a proper subvariety, by the assumption on $V$, and hence has density 0. This yields the desired contradiction.

\end{proof}

\end{document}